\newtheorem{theorem}{Theorem}   
\newtheorem{lemma}{Lemma}
\newtheorem{proposition}{Proposition}
\newtheorem{corollary}{Corollary}
\newtheorem{example}{Example}
\newtheorem{remark}{Remark}
\newtheorem{definition}{Definition}
\newtheorem{assumption}{Assumption}
\newcommand{\bmv}{\bm{v}}
\DeclareMathOperator{\In}{In}
\DeclareMathOperator{\inter}{int}
\newcommand{\None}{
\left[\begin{array}{c|c}
I & \begin{array}{c}
X_+\\Y_- 
\end{array}
\\\hline
0 & \begin{array}{c}
-X_-\\-U_- 
\end{array}
\end{array}\right]
\!
\bbm
\Phi_{11} & \Phi_{12}\\
\Phi_{12}^\top & \Phi_{22}
\ebm\!
\left[\begin{array}{c|c}
I & \begin{array}{c}
X_+\\Y_- 
\end{array}
\\\hline
0 & \begin{array}{c}
-X_-\\-U_- 
\end{array}
\end{array}\right]^\top\!\!
}
\newcommand{\systwo}{
\bbm
I\\\hline\\[-3mm]
\begin{matrix}
A^\top & C^\top\!\\
B^\top & D^\top\!
\end{matrix}
\ebm
}
\newcommand{\sysone}{
\bbm
I\\\hline\\[-3mm]
\begin{matrix}
A & B\\
C & D
\end{matrix}
\ebm
}
\newcommand{\verbatimfont}[1]{\def\verbatim@font{#1}}%
\newcommand{\bi}{\begin{itemize}}\newcommand{\ei}{\end{itemize}}
\newcommand{\be}{\begin{equation}}\newcommand{\ee}{\end{equation}}
\newcommand{\bee}{\begin{enumerate}}\newcommand{\eee}{\end{enumerate}}
\newcommand{\bea}{\begin{eqnarray}}\newcommand{\eea}{\end{eqnarray}}
\newcommand{\beas}{\begin{eqnarray*}}\newcommand{\eeas}{\end{eqnarray*}}
\newcommand{\bc}{\begin{center}}\newcommand{\ec}{\end{center}}
\title{Data-driven dissipativity analysis:\\
\Large application of the matrix S-lemma}
\author{Henk J. van Waarde, M. Kanat Camlibel, Paolo Rapisarda, and Harry L. Trentelman\\
	POC: H. J. van Waarde (hv280@cam.ac.uk)\\ \today}
\newif\ifPDF \ifx\pdfoutput\undefined\PDFfalse \else\ifnum\pdfoutput > 0\PDFtrue \else\PDFfalse \fi \fi
\DeclareMathOperator{\rank}{rank}
\let\leq\leqslant
\let\geq\geqslant
\newcommand{\calN}{\ensuremath{\mathcal{N}}}
\newcommand{\hatu}{\ensuremath{\hat{u}}}
\newcommand{\haty}{\ensuremath{\hat{y}}}
\newcommand{\hatF}{\ensuremath{\hat{F}}}
\newcommand{\hatG}{\ensuremath{\hat{G}}}
\newcommand{\hatH}{\ensuremath{\hat{H}}}
\newcommand{\bmat}{\begin{matrix}}
\newcommand{\emat}{\end{matrix}}
\newcommand{\bbm}{\begin{bmatrix}}
\newcommand{\ebm}{\end{bmatrix}}
\newcommand{\bbma}{\begin{bmatrix*}[r]}
\newcommand{\ebma}{\end{bmatrix*}}
\newcommand{\bpm}{\begin{pmatrix}}
\newcommand{\epm}{\end{pmatrix}}
\newcommand{\bvm}{\begin{vmatrix}}
\newcommand{\evm}{\end{vmatrix}}
\newcommand{\bse}{\begin{subequations}}
\newcommand{\ese}{\end{subequations}}
\newcommand{\beq}{\begin{equation}}
\newcommand{\eeq}{\end{equation}}
\newcommand{\ben}{\renewcommand{\labelenumi}{\arabic{enumi}.}
\renewcommand{\theenumi}{\arabic{enumi}}\begin{enumerate}}
\newcommand{\een}{\end{enumerate}}
\newcommand{\beni}{\renewcommand{\labelenumi}{\roman{enumi}.}
\renewcommand{\theenumi}{\roman{enumi}}\begin{enumerate}}
\newcommand{\eeni}{\end{enumerate}}
\newcommand{\bena}{\renewcommand{\labelenumi}{\alph{enumi}.}
\renewcommand{\theenumi}{\alph{enumi}}\begin{enumerate}}
\newcommand{\eena}{\end{enumerate}}
\newcommand{\bit}{\begin{itemize}}
\newcommand{\eit}{\end{itemize}}
\newcommand{\bthe}{\begin{theorem}}
\newcommand{\ethe}{\end{theorem}}
\newcommand{\blem}{\begin{lemma}}
\newcommand{\elem}{\end{lemma}}
\newcommand{\bprop}{\begin{proposition}}
\newcommand{\eprop}{\end{proposition}}
\newcommand{\bex}{\begin{example}}
\newcommand{\eex}{\end{example}}
\newcommand{\bas}{\begin{assumption}}
\newcommand{\eas}{\end{assumption}}
\newcommand{\bre}{\begin{remark}}
\newcommand{\ere}{\end{remark}}
\newcommand{\bcor}{\begin{corollary}}
\newcommand{\ecor}{\end{corollary}}
\newcommand{\bdfn}{\begin{definition}}
\newcommand{\edfn}{\end{definition}}
\newcommand{\bcon}{\begin{conjecture}}
\newcommand{\econ}{\end{conjecture}}
\newcommand{\inv}{\ensuremath{^{-1}}}
\newcommand{\pset}[1]{\ensuremath{\{#1\}}}
\newcommand{\zset}{\ensuremath{\pset{0}}}
\newcommand{\norm}[1]{\ensuremath{\| #1 \|}}
\newcommand{\R}{\ensuremath{\mathbb R}}
\newcommand{\C}{\ensuremath{\mathbb C}}
\newcommand{\qand}{\quad\text{ and }\quad}
\begin{document}
\maketitle
\CSMsetup
\nolinenumbers \modulolinenumbers[2] 

As is generally acknowledged, the work of Jan Willems on dissipativity has formed the foundation for large parts of systems and control theory as developed in the past fifty years. While working as an assistant professor in the Department of Electrical Engineering at MIT during the period between 1968 to 1973, he wrote the ground-breaking papers ‘‘Dissipative dynamical systems, General theory’’; and ‘‘Dissipative dynamical systems, Linear systems with quadratic supply rates’’ in Archive for Rational Mechanics and Analysis \cite{Willems1972-1}, \cite{Willems1972-2}.  In these  seminal papers he introduced the notion of a dissipative system. During the same period, he also made fundamental contributions to the subject of optimal control, in particular to linear quadratic problems with indefinite cost, and the associated algebraic Riccati equation \cite{Willems1971}. Indeed, the most appealing framework for studying the Riccati equation is that of dissipative systems, because there the Riccati equation emerges in a natural way by reformulating the dissipation inequality (which expresses the fact that the system under consideration is dissipative) as a so-called linear matrix inequality (LMI). Together, \cite{Willems1972-1}, \cite{Willems1972-2} and \cite{Willems1971} are generally considered to provide the main concepts and analysis tools in many areas of linear and nonlinear systems and control, ranging from stability theory, linear quadratic optimal control and stochastic realization theory, to network synthesis, differential games and robust control.   

In the early 1980s, Jan Willems became conscious of the limitations of input-output thinking as the framework for systems and control theory. This led him to develop the behavioral approach, in which a dynamical system is simply viewed as a family of trajectories.  The fourth author of this paper has had the opportunity to collaborate with Jan Willems on dissipativity in the context of the behavioral approach to linear systems. A central concept in that theory is the notion of quadratic differential form (QDF), introduced  in 1998 in \cite{Willems1998}.  In the behavioral theory of dissipativity, supply rates and storage functions are represented by QDFs. Using this framework, in \cite{Trentelman1997} it was shown that every storage function of a given dissipative system can be written as a quadratic function of the state of that system. Also, in \cite{Willems2002} and \cite{Trentelman2002},  the well-known $H_{\infty}$ control problem for linear input-state-output systems \cite{Doyle1989} was reformulated as a behavioral control problem of finding a suitable dissipative controlled behavior. The famous conditions from \cite{Doyle1989} (requiring the existence of two solutions of Riccati equations that satisfy a coupling condition) were generalized in \cite{Willems2002}, requiring the existence of storage functions (as QDFs) satisfying a coupling condition.  

In addition, it was shown in \cite{Willems1998} that computations on QDFs can be represented as formal operations on two-variable polynomial matrices. Using this framework, this led the third and fourth authors of the present paper to apply behavioral dissipativity theory to the problems of $J$-spectral factorization \cite{Trentelman1999} and the existence of sign-definite solutions to the algebraic Riccati equation \cite{Trentelman2001}. Also the second author has employed dissipativity theory in the contexts of uncontrollable systems \cite{Camlibel2003} and discontinuous systems \cite{Camlibel2002,Frasca2010,Camlibel2016}.

In the present paper, we study dissipativity of linear finite-dimensional input-state-output systems from a data-driven perspective. It is well-known that for a given input-state-output system with given supply rate, one can test dissipativity by checking the feasibility of a linear matrix inequality involving the system matrices. In this paper, we assume that the system dynamics is unknown, in the sense that we do know the state space dimension and the input and output dimensions, but the system matrices are not known. In this situation, the question arises whether we can verify dissipativity using measured system trajectories, instead of a system model.  

Recently, the problem of inferring dissipativity properties from data has received considerable attention. In \cite{Romer2017}, the set of supply rates with a given structure with respect to which a (not necessarily linear) system is dissipative was computed on the basis of a finite number of its input-output trajectories. In \cite{Romer2017b} an iterative procedure was illustrated to compute the input feedforward passivity index and the shortage of passivity for discrete-time linear systems. The most relevant references for the problem studied in this paper are \cite{Maupong2017a,Romer2019,Koch2020b}. In \cite{Maupong2017a}, the notion of (finite-horizon) L-dissipativity was introduced and also studied in \cite{Romer2019}. A discrete-time system is $L$-dissipative if the average of the supply rate over the interval $[0,L]$ is nonnegative for all system trajectories. This is a sufficient condition for dissipativity, equivalent to a matrix inequality set up from the supply rate and a system trajectory over $[0,L]$. In both these contributions, a crucial assumption is that the input trajectory is persistently exciting of a sufficiently high order (see \cite{Willems2005} and \cite{vanWaarde2020c}). This property of the input sequence can be shown to imply that the data-generating system is uniquely identifiable from the data.  

In this paper we adopt the more classical notion of dissipativity for linear systems, rather than $L$-dissipativity. We consider a setup similar to that of \cite{Koch2020b}. In that paper, sufficient data-based conditions were given for dissipativity. The main difference  between our results and those in \cite{Koch2020b} is that we provide necessary and sufficient conditions for dissipativity based on data, for noiseless and noisy data. An additional (but smaller) difference is that in our setting, also the output system matrices are unknown.

Our approach involves bounding the noise by a quadratic matrix inequality, which implies that also the unknown system parameters satisfy a quadratic matrix inequality. Our goal is to ascertain dissipativity of \emph{all} systems satisfying this inequality. The method thus fits in the robust control literature, where quadratic uncertainty descriptions have been studied in detail. We mention contributions to integral quadratic constraints \cite{Megretski1997}, the quadratic separator \cite{Iwasaki1998}, and the full block S-procedure \cite{Scherer1997,Scherer2001}. At a high level, our approach differs from classical robust control in that we provide direct mappings from data to storage functions. At a more technical level, we make use of a new matrix S-lemma that was recently established in \cite{vanWaarde2020d} as a generalization of the famous S-lemma by V.~A.~Yakubovich \cite{Yakubovich1977}. We refer to \cite{vanWaarde2020d} for a more detailed comparison between this approach and the robust control literature.

Specifically, our contributions are the following. First, we prove that dissipativity of an unknown linear system can only be ascertained on the basis of the given data if a matrix constructed from measured states and inputs has full rank. In the noiseless data case, this implies that one can only verify dissipativity from data if the data-generating system is the only one that explains the data, in other words, if the true system is identifiable from the data. In this case, dissipativity of the unknown system can be ascertained by checking the feasibility of a given data-based linear matrix inequality.  In the noisy data case, it turns out that one does not need identifiability. In order to check dissipativity in this case, we combine the matrix S-lemma with a basic dualization lemma to provide a data-driven test for dissipativity.

\subsection*{Notation}

The \emph{inertia} of a symmetric matrix $S$ is denoted by $\In(S)=(\rho_-,\rho_0,\rho_+)$ where $\rho_-$, $\rho_0$, and $\rho_+$ respectively denote the number of negative, zero, and positive eigenvalues of $S$. The \emph{interior} of a set $V$ is denoted by $\inter(V)$.

\section{Dissipativity of linear systems}
\label{sec:dis}
Consider a linear discrete-time input/state/output system
\bse\label{e:lin-sys}
\begin{align}
\bm x(t+1)&=A \bm x(t)+B \bm u(t) \\
\bm y(t)&=C \bm x(t)+D \bm u(t)
\end{align}
\ese
where $A\in\R^{n\times n}$, $B\in\R^{n\times m}$, $C\in\R^{p\times n}$, and $D\in\R^{p\times m}$. 

Let $S=S^\top  \in\R^{(m+p)\times(m+p)}$. The system \eqref{e:lin-sys} is said to be \emph{dissipative\/} with respect to the {\em supply rate}  
\beq\label{e:supply}
s(u,y)=\bbm u\\y\ebm^\top S \bbm u\\y\ebm
\eeq
if there exists $P\in\R^{n\times n}$ with $P=P^\top   \geq 0$ such that the {\em dissipation inequality\/}
\begin{equation}
\label{dispineq}
\bm x(t)^\top P \bm x(t) + s\big(\bm u(t),\bm y(t) \big) \geq  \bm x(t+1)^\top P \bm x(t+1)
\end{equation} 
holds for all $t \geq 0$ and for all  trajectories $(\bm u,\bm x,\bm y): \mathbb{N}\rightarrow \mathbb{R}^{m+n+p}$ of \eqref{e:lin-sys}.

It follows from \eqref{dispineq} that dissipativity with respect to the supply rate \eqref{e:supply} is equivalent with the feasibility of the linear matrix inequalities $P=P^\top \geq 0$ and 
\setlength\arraycolsep{2pt}
\beq\label{eq:KY_- P}
\bbm
I & 0 \\A & B
\ebm^\top
\bbm
P & 0\\0 & -P
\ebm
\bbm
I & 0 \\A & B
\ebm+
\bbm
0 & I\\C & D
\ebm^\top
S
\bbm
0 & I\\C & D
\ebm
\geq 0.
\eeq

\section{Problem formulation}
\label{sec:inf}
Consider the linear discrete-time input/state/output system
\bse\label{e:tru-sys}
\begin{align}
\bm x(t+1)&=A_{s} \bm x(t)+B_{s} \bm u(t)+\bm w(t)\\
\bm y(t)&=C_{s} \bm x(t)+D_{s} \bm u(t)+\bm z(t)\end{align}
\ese
where $(\bm u,\bm x,\bm y)\in\R^{m+n+p}$ are the input, state and output, and $(\bm w,\bm z)\in\R^{n+p}$ are noise terms. Throughout the paper, we assume that the ``true" system matrices $(A_{s}, B_{s},C_{s},D_{s})$ and the noise $(\bm w,\bm z)$ are {\em unknown\/}. What is known instead are a finite number of input/state/output measurements of \eqref{e:tru-sys}, which we collect in the matrices
\bse\label{eq:datamatrix}
\begin{align*}
U_- &:=\begin{bmatrix}
u(0) & u(1) & \cdots & u(T-1)
\end{bmatrix}\\
X&:=\begin{bmatrix}
x(0)& x(1)&\cdots& x(T) 
\end{bmatrix}\\
Y_- &:= \begin{bmatrix}
y(0) & y(1) & \cdots & y(T-1)
\end{bmatrix}.
\end{align*}
\ese
We will also make use of the auxiliary matrices
\begin{align*}
X_-&:=\begin{bmatrix}
x(0)& x(1)&\cdots& x(T-1)
\end{bmatrix}\\
X_+&:=\begin{bmatrix}
x(1) & x(2) & \cdots & x(T)
\end{bmatrix}.
\end{align*}
The goal of this paper is to infer dissipativity properties of the true system from the data $(U_- ,X,Y_- )$.

\newcommand{\sig}{\Sigma_{\calN}^{U_- ,X,Y_- }}
\newcommand{\sigzero}{\Sigma_{\calN_0}^{U_- ,X,Y_- }}
\newcommand{\sigone}{\Sigma_{\calN_2}^{U_- ,X,Y_- }}
\newcommand{\sigtwo}{\Sigma_{\calN_1}^{U_- ,X,Y_- }}

 We define
$$
\Sigma^\mathcal{N}=\left\{\left(A,B,C,D\right) \mid \begin{bmatrix} X_+\\Y_-  \end{bmatrix}- \begin{bmatrix} A&B\\
C&D\end{bmatrix}\begin{bmatrix}X_-\\ U_-  \end{bmatrix}\in\calN \right\},
$$
where $\calN\subseteq\R^{(n+p)\times T}$ is a set defining a \emph{noise model} to be specified below. We assume that
\beq\label{e:true in exp}
(A_{s},B_{s},C_{s},D_{s})\in \Sigma^\mathcal{N}.
\eeq
In the sequel, we will consider three types of noise models. The first one will capture noise-free situations in which the measurements $(U_- ,X,Y_- )$ are exact: 
\begin{equation}\label{eq:N0}
\calN_0:=\zset.
\end{equation}

The second noise model is defined by
\begin{equation}\label{eq:N1}
\calN_1:= \left\{ V\in\R^{(n+p)\times T} \mid \bbm
I \\ V^\top
\ebm
^\top
\!\!
\bbm
\Phi_{11} & \Phi_{12}\\
\Phi_{12}^\top & \Phi_{22}
\ebm\!\!
\bbm
I \\ V^\top 
\ebm
\geq 0 \right\},
\end{equation}
where $\Phi_{11}=\Phi_{11}^\top\in\R^{(n+p)\times(n+p)}$, $\Phi_{12}\in\R^{(n+p)\times T}$, and $\Phi_{22}=\Phi_{22}^\top\in\R^{T\times T}$ are known matrices. This noise model was studied before \cite{vanWaarde2020d} in the context of data-driven quadratic stabilization and $H_2$ and $H_{\infty}$ control. In order to be able to discuss some special cases of the noise model \eqref{eq:N1}, we label the columns of $V$ as $\begin{bmatrix} v(0) & v(1) & \cdots & v(T-1) \end{bmatrix}$. In the special case $\Phi_{12} = 0$ and $\Phi_{22} = -I$, the bound in \eqref{eq:N1} reduces to 
\begin{equation}
    \label{redasnoise}
    V V^\top = \sum_{t=0}^{T-1} v(t) v(t)^\top \leq \Phi_{11}.
\end{equation}
This inequality can be interpreted as a type of energy bound on the noise. If $\bmv$ is a random variable, the \emph{sample covariance matrix} of $v(0),v(1),\dots,v(T-1)$ is given by 
$$
\frac{1}{T-1} V (I-\frac{1}{T}J) V^\top,
$$
where $J$ is the matrix of ones. Thus, the noise model \eqref{eq:N1} can also capture known bounds on the sample covariance by the choices $\Phi_{12} = 0$ and $\Phi_{22} = -\frac{1}{T-1} (I-\frac{1}{T}J)$. We emphasize, however, that we do not make any assumptions on the statistics of the noise and work with the general model \eqref{eq:N1} instead.

Finally, we remark that norm bounds on the individual noise samples $v(t)$ also give rise to bounds of the form \eqref{redasnoise}, although this generally leads to some conservatism. Indeed, note that $\norm{v(t)}^2 \leq \epsilon$ implies that $v(t) v(t)^\top \leq \epsilon I$ for all $t$. As such, the bound \eqref{redasnoise} is satisfied for $\Phi_{11} = T \epsilon I$.

The third noise model that we will consider is defined  by
\begin{equation}\label{eq:N2}
\calN_2:=\left\{ V\in\R^{(n+p)\times T\!} \mid 
\bbm
I \\V
\ebm^\top\!\!
\bbm
\Theta_{11} & \Theta_{12}\\
\Theta_{12}^\top & \Theta_{22}
\ebm\!\!
\bbm
I \\ V 
\ebm \geq 0
\right\}
\end{equation}
where $\Theta_{11}=\Theta_{11}^\top\in\R^{T\times T}$, $\Theta_{12}\in\R^{T\times(n+p)}$, and $\Theta_{22}=\Theta_{22}^\top\in\R^{(n+p)\times(n+p)}$ are known matrices. This noise model was studied before in \cite{Berberich2019c} in the context of data-driven state feedback control. It turns out that under mild assumptions, it is possible to convert noise model $\calN_1$ to $\calN_2$ and vice versa. The interested reader is referred to Corollary~\ref{c:eqnoisemodels}.

We now define the  property of \emph{informativity for dissipativity}, which is the main concept studied in this paper. This definition is inpired by \cite{vanWaarde2020}, and we refer to that paper for a general treatment of data informativity and an application to data-driven controllability analysis, stabilization and optimal control. 

\begin{definition}\label{def:dd diss}
Let a noise model $\calN$ be given. The data $(U_-,X,Y_-)$ are \emph{informative for dissipativity\/} with respect to the supply rate \eqref{e:supply} if there exists a matrix $P=P^\top\geq 0$ such that the LMI \eqref{eq:KY_- P} holds for every system $(A,B,C,D)\in\Sigma^\mathcal{N}$. 
\end{definition}

The rationale behind Definition~\ref{def:dd diss} is as follows: on the basis of the given data we are unable to distinguish between the systems in $\Sigma^\mathcal{N}$ in the sense that any of these systems could have generated the data. Nonetheless, if \emph{all} of these systems are dissipative, then we can also conclude that the \emph{true} data-generating system is dissipative. Note that we restrict our attention to the situation in which the systems in $\Sigma^\mathcal{N}$ are dissipative with a \emph{common} storage function.

The following assumptions will be valid throughout the paper: 
\begin{enumerate}[({A}1)]
\item The matrix $S$ has inertia $\In(S)=(p,0,m)$.\label{ass:supply}
\item The sets $\calN_1$ and $\calN_2$ are bounded and have nonempty interior.\label{ass:n1 and n2}
\end{enumerate}

It is a well-known fact that a necessary condition for dissipativity of any system of the form \eqref{e:lin-sys} is that $m \leq \rho_+$, i.e., the input dimension does not exceed the positive signature of $S$. Assumption (A1) requires that the input dimension is equal to this positive signature, and in addition that the matrix $S$ is nonsingular. This assumption is satisfied, for example, for the positive-real and bounded-real case \cite{Scherer1999}. Indeed, in the positive-real case we have that $m = p$ and
$$
S = \begin{bmatrix}
0 & I_m \\ I_m & 0
\end{bmatrix},
$$
so that $\In(S) = (m,0,m)$. In the bounded-real case we have 
$$
S = \begin{bmatrix}
\gamma^2 I_m & 0 \\ 0 & -I_p
\end{bmatrix}
$$
for $\gamma > 0$, which implies that $\In(S)=(p,0,m)$. Assumption (A1) also turns out to be instrumental in computing storage functions of the ``dual" system from those of the primal one, see Proposition~\ref{p:diss dual}. Assumption (A2) can be verified straightforwardly by assessing certain definiteness properties of the matrices $\Phi$ and $\Theta$ in \eqref{eq:N1} and \eqref{eq:N2}; for more details see ``\nameref{sidebar:(A2)}".

The main contribution of this paper is to provide necessary and sufficient conditions for data informativity for the noise models $\calN_0$, $\calN_1$, and $\calN_2$.

\section{Main results}
\label{sec:res}

\subsection{A necessary condition for informativity}
We begin with a necessary condition for informativity, that applies to all three noise models.
\bthe\label{t:gen}
Let a noise model $\calN$ be given. If the data $(U_- ,X,Y_- )$ are informative for dissipativity with respect to the supply rate \eqref{e:supply}, then
\beq\label{e:full row rank}
\rank \bbm X_-\\U_- \ebm = n + m.
\eeq
\ethe

Essentially, Theorem~\ref{t:gen} and the rank condition \eqref{e:full row rank} formalize the intuition that dissipativity can only be assessed from data that are sufficiently rich. In the noise-free setting (involving model $\mathcal{N}_0$), the rank condition \eqref{e:full row rank} implies that the system matrices $A_s,B_s,C_s$ and $D_s$ can be uniquely identified from the $(U_-,X,Y_-)$-data. In this setting, the interpretation of Theorem~\ref{t:gen} is that dissipativity can \emph{only} be verified from data that are rich enough to uniquely identify the underlying data-generating system.  

\begin{proof}
Suppose that \eqref{e:full row rank} does not hold. Then, there exist $\xi\in\R^n$ and $\eta\in\R^m$ such that $\xi^\top\xi+\eta^\top\eta=1$ and
\beq\label{e:xi eta in left kernel}
\bbm\xi^\top &\eta^\top \ebm\begin{bmatrix}X_-\\ U_-  \end{bmatrix}=0.
\eeq
The set $\Gamma= \{u \mid \exists\,y \text{ such that }s(u,y)<0\}$ has nonempty interior since there exists $(\hatu,\haty)$ with $s(\hatu,\haty)<0$ due to Assumption~(A1). We claim that there exist $x\in\R^n$ and $u\in\Gamma$ such that
\beq\label{e:xi eta x u}
\xi^\top x+\eta^\top u=1.
\eeq
Indeed, if $\xi \neq 0$, then one can construct $x$ and $u$ by selecting $u \in \Gamma$ arbitrarily, and by defining $x := \frac{1-\eta^\top u}{\xi^\top \xi} \xi$. If $\xi = 0$ then $x \in \mathbb{R}^n$ can be selected arbitrarily. In this case, we can choose $u$ as follows. Since $\Gamma$ has nonempty interior, there exists $\bar{u} \in \Gamma$ such that $\eta^\top \bar{u} \neq 0$. Note that $\alpha \bar{u} \in \Gamma$ for all nonzero $\alpha \in \mathbb{R}$. As such, there exists an $\alpha \in \mathbb{R}$ such that $u := \alpha \bar{u} \in \Gamma$ and $\eta^\top u = 1$. For this $u$, we obtain \eqref{e:xi eta x u} which proves our claim. 

Since $u\in\Gamma$, there  exists $y$  such that $s(u,y)<0$. Let $(A_0,B_0,C_0,D_0)\in\Sigma^\mathcal{N}$. Define
\beq\label{e:zeta theta}
\zeta:=x-A_0x-B_0u \qand \theta:=y-C_0x-D_0u,
\eeq
and 
$$
\bbm
A & B\\C& D
\ebm
:=
\bbm
A_0 & B_0\\C_0& D_0
\ebm
+\bbm
\zeta\\\theta
\ebm
\bbm \xi^\top& \eta^\top\ebm.
$$
It follows from \eqref{e:xi eta in left kernel} that $(A,B,C,D)\in\Sigma^\mathcal{N}$. Since the data are informative for dissipativity with respect to the supply rate \eqref{e:supply}, there must exist $P=P^\top \geq 0$ such that 
\beq\label{eq:KY_- P2}
\bbm
I & 0 \\A & B
\ebm^\top
\bbm
P & 0\\0 & -P
\ebm
\bbm
I & 0 \\A & B
\ebm+
\bbm
0 & I\\C & D
\ebm^\top
S
\bbm
0 & I\\C & D
\ebm
\geq 0.
\eeq
Note that
$$
\bbm
I & 0 \\A & B
\ebm
\bbm
x\\u
\ebm=\bbm x\\ x\ebm\qand \bbm
0 & I\\C & D
\ebm
\bbm
x\\u
\ebm=\bbm u\\ y\ebm ,
$$
due to \eqref{e:xi eta x u} and \eqref{e:zeta theta}. Therefore, the following inequality holds: 
\begin{align*}
&\bbm x\\u \ebm^\top\!\!\!
\left(\bbm
I & 0 \\A & B
\ebm^\top
\bbm
P & 0\\0 & -P
\ebm
\bbm
I & 0 \\A & B
\ebm+
\bbm
0 & I\\C & D
\ebm^\top
S
\bbm
0 & I\\C & D
\ebm\right)\!\!
\bbm x\\u \ebm\\
&=\bbm x\\x \ebm^\top
\bbm
P & 0\\0 & -P
\ebm
\bbm x\\x \ebm
+
\bbm u\\y \ebm^\top
S
\bbm u\\y \ebm=s(u,y)<0 \; .
\end{align*}
However, this contradicts  \eqref{eq:KY_- P2}. Consequently, \eqref{e:full row rank} holds.
\end{proof}

\subsection{Informativity and noiseless data}

We now give a characterization of informativity for dissipativity for the noiseless case. We note that the condition of Theorem~\ref{t:exact} has appeared in a similar setting in \cite[Thm. 4]{Koch2020} and \cite[Thm. 3]{Koch2020b}, where an ``if"-statement was proven. Here we prove that these conditions are necessary and sufficient by leveraging Theorem~\ref{t:gen}.
\begin{theorem}\label{t:exact}
Consider the noise model $\calN_0$. The data $(U_- ,X,Y_- )$ are informative for dissipativity with respect to the supply rate \eqref{e:supply} if and only if 
\beq\label{e:exact cond1}
\rank \bbm X_-\\U_- \ebm = n+m
\eeq
and there exists $P=P^\top \geq0$ such that
\beq\label{e:exact cond2}
\bbm
X_-\\X_+
\ebm^\top
\bbm
P & 0\\0 & -P
\ebm
\bbm
X_-\\X_+
\ebm+
\bbm
U_- \\Y_- 
\ebm^\top
S
\bbm
U_- \\Y_- 
\ebm
\geq 0.
\eeq
\end{theorem}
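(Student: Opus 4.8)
The plan is to exploit the special structure of the noise-free model $\calN_0=\zset$, for which $\Sigma^{\calN_0}$ is exactly the solution set of the linear equation $\bbm A & B\\ C & D\ebm\bbm X_-\\U_-\ebm=\bbm X_+\\Y_-\ebm$. The engine of the whole argument is a single algebraic identity: congruence of the model LMI matrix in \eqref{eq:KY_- P} by the data matrix $\bbm X_-\\U_-\ebm$ produces precisely the data inequality \eqref{e:exact cond2}. Indeed, in the noise-free case $X_+=A_sX_-+B_sU_-$ and $Y_-=C_sX_-+D_sU_-$, so that $\bbm I & 0\\ A_s & B_s\ebm\bbm X_-\\U_-\ebm=\bbm X_-\\X_+\ebm$ and $\bbm 0 & I\\ C_s & D_s\ebm\bbm X_-\\U_-\ebm=\bbm U_-\\Y_-\ebm$. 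I would prove the two implications by reading this identity in opposite directions, with the full-row-rank condition \eqref{e:exact cond1} being exactly what licenses the backward reading.

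For necessity, the rank condition \eqref{e:exact cond1} is immediate from Theorem~\ref{t:gen}. To obtain \eqref{e:exact cond2}, note that informativity supplies a $P=P^\top\geq 0$ for which the LMI \eqref{eq:KY_- P} holds for every system in $\Sigma^{\calN_0}$, in particular for the true system $(A_s,B_s,C_s,D_s)$. Since congruence preserves positive semidefiniteness, pre- and post-multiplying that LMI by $\bbm X_-\\U_-\ebm^\top$ and $\bbm X_-\\U_-\ebm$ and substituting the identities above yields exactly \eqref{e:exact cond2} with the same $P$.

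For sufficiency, I would first observe that full row rank of $\bbm X_-\\U_-\ebm$ forces $\Sigma^{\calN_0}$ to be the singleton $\{(A_s,B_s,C_s,D_s)\}$: any two consistent coefficient matrices differ by a matrix whose rows lie in the (now trivial) left kernel of $\bbm X_-\\U_-\ebm$. Hence informativity reduces to exhibiting one $P=P^\top\geq 0$ for which \eqref{eq:KY_- P} holds for the true system. I would take the $P$ furnished by \eqref{e:exact cond2}; substituting the noise-free relations rewrites the left-hand side of \eqref{e:exact cond2} as $\bbm X_-\\U_-\ebm^\top N\bbm X_-\\U_-\ebm$, where $N$ denotes the model-LMI matrix of \eqref{eq:KY_- P} evaluated at the true system. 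It then remains to pass from $\bbm X_-\\U_-\ebm^\top N\bbm X_-\\U_-\ebm\geq 0$ back to $N\geq 0$.

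This de-congruence step is the only genuine obstacle, and it is precisely where \eqref{e:exact cond1} is used a second time. Full row rank means the columns of $\bbm X_-\\U_-\ebm$ span $\R^{n+m}$, so every $v\in\R^{n+m}$ can be written as $v=\bbm X_-\\U_-\ebm w$ for some $w$, giving $v^\top N v=w^\top\bigl(\bbm X_-\\U_-\ebm^\top N\bbm X_-\\U_-\ebm\bigr)w\geq 0$. Hence $N\geq 0$, which is exactly \eqref{eq:KY_- P} for the true system, and sufficiency follows. I would stress that the same matrix $P$ threads through both directions, so no separate construction of a storage function is required beyond the one already guaranteed by \eqref{e:exact cond2}.
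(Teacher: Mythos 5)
Your proposal is correct and follows essentially the same route as the paper's proof: both directions rest on the congruence identity between the model LMI and the data inequality, with the full-row-rank condition \eqref{e:exact cond1} (from Theorem~\ref{t:gen}) forcing $\Sigma^{\calN_0}$ to be a singleton and licensing the de-congruence step $\bbm X_-\\U_- \ebm^\top L \bbm X_-\\U_- \ebm \geq 0 \Rightarrow L \geq 0$. You merely spell out this last surjectivity argument more explicitly than the paper does.
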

\medskip
\begin{proof}
To prove the ``if" part, note that \eqref{e:exact cond1} implies that $\Sigma^{\mathcal{N}_0}$ is a singleton. It follows from \eqref{e:true in exp} that 
$$
\Sigma^{\mathcal{N}_0}=\pset{(A_{s},B_{s},C_{s},D_{s})}
$$
and hence 
$$
\bbm X_+\\Y_- \ebm=\bbm A_{s} & B_{s}\\C_{s}& D_{s}\ebm
\bbm X_-\\U_- \ebm.
$$
Define 
\begin{align*}
L :=
\bbm
I & 0 \\A_{s} & B_{s}
\ebm^\top
\!
\bbm
P & 0\\0 & -P
\ebm
\!
\bbm
I & 0 \\A_{s} & B_{s}
\ebm + \bbm
0 & I\\C_{s} & D_{s}
\ebm^\top
\!S
\bbm
0 & I\\C_{s} & D_{s}
\ebm.
\end{align*}
Then \eqref{e:exact cond2} implies 
\beq\label{e:exact lmi with x-u}
\bbm
X_-\\U_- 
\ebm^\top
L
\bbm
X_-\\U_- 
\ebm
\geq 0.
\eeq
It follows again from \eqref{e:exact cond1} that $L\geq 0$. By \eqref{eq:KY_- P}, this means that the system $(A_{s},B_{s},C_{s},D_{s})$ is dissipative with respect to the supply rate \eqref{e:supply}.\\

To prove the ``only if" part, note that it follows from Theorem~\ref{t:gen} that \eqref{e:exact cond1} holds. Hence we have
$$
\Sigma^{\mathcal{N}_0}=\pset{(A_{s},B_{s},C_{s},D_{s})}.
$$ 
Since the data are informative for dissipativity for the given $\mathcal{N}_0$, there exists $P=P^\top\geq 0$ such that $L \geq 0$. By post- and pre-multiplying this expression by $\bbm X_-\\U_- \ebm$ and its transpose, we conclude that  
\eqref{e:exact cond2} holds.
\end{proof}

Theorem~\ref{t:exact} provides a data-based condition for dissipativity in terms of a linear matrix inequality. Linear matrix inequalities can be solved using standard software packages. We note, however, that such solvers are known to be unreliable for LMI's which define
feasible sets without interior points. As such, from a numerical point of view it is desirable that there exists a positive definite $P$ such that
left-hand side of \eqref{e:exact cond2} is positive definite.

\begin{remark}\label{rem:exact}
Condition \eqref{e:exact cond1} implies that even if the  state is measured (a more advantageous situation than knowing only the input-output data, as is typically assumed in data-driven applications), it is \emph{only} possible to ascertain dissipativity from noise-free data if the plant is \emph{uniquely} identifiable, i.e., if $|\Sigma^{\mathcal{N}_0}| = 1$. Consequently, in the noise-free setting, methods for determining dissipativity directly from data are conceptually equivalent with indirect ones consisting of a system identification stage, followed by a second one involving a  check on the solvability of an LMI (condition \eqref{eq:KY_- P}).
\end{remark}

\subsection{Informativity and noisy data}
We first consider the noise model $\mathcal{N}_1$ defined in \eqref{eq:N1}. Define 
\beq
N_1:=\None.  
\eeq
Note that $(A,B,C,D)\in \Sigma^{\mathcal{N}_1}$ if and only if
\beq\label{e:char N2 model}
\systwo^\top\!\! N_1 \systwo\geq 0.
\eeq
Partition 
$$
S=\bbm F & G\\G^\top & H\ebm,
$$
where  $F\in\R^{m\times m}$, $G\in\R^{m\times p}$, $H\in\R^{p\times p}$, and define
\begin{equation*}
M_1:=\bbm
P & 0 & 0 & 0\\
0 & F & 0 & G\\
0 & 0 & -P & 0\\
0 & G^\top & 0 & H
\ebm.
\end{equation*}
With this notation in place, the problem of characterizing informativity for dissipativity is equivalent to finding conditions  under which the inequality
\begin{equation}
\label{eqM1}
\sysone^\top\!\! M_1 \sysone \geq 0
\end{equation} 
holds for all $(A,B,C,D)$ satisfying
\begin{equation}
\label{eqN1}
\systwo^\top\!\! N_1 \systwo \geq 0 \; .
\end{equation}

Our strategy to solve this problem is to invoke the so called matrix S-lemma \cite{vanWaarde2020d}. The matrix $S$-lemma is a powerful generalization to matrix variables of the classical $S$-lemma (also called $S$-procedure) developed in the seventies of the previous century by V. A. Yakubovich \cite{Yakubovich1977}. We recall this result below.

\begin{proposition}[Matrix S-lemma]
\label{matSlemma}
Let $M,N \in \mathbb{R}^{(q+r) \times (q+r)}$ be symmetric matrices. Assume that there exists a matrix $\bar{Z} \in \mathbb{R}^{r \times q}$ such that
\begin{equation}
    \label{matSlater}
\begin{bmatrix} I \\ \bar{Z} \end{bmatrix}^\top\!\!\! N \begin{bmatrix} I \\ \bar{Z} \end{bmatrix} > 0.
\end{equation}
Then we have that
    $$
    \begin{bmatrix} I \\ Z \end{bmatrix}^\top\!\!\! M \begin{bmatrix} I \\ Z \end{bmatrix} \geq 0 \:\: \text{ for all } \,  Z \in \mathbb{R}^{r \times q} \text{ satisfying } \begin{bmatrix} I \\ Z \end{bmatrix}^\top\!\!\! N \begin{bmatrix} I \\ Z \end{bmatrix} \geq 0
    $$
    if and only if there exists a scalar $\alpha \!\geq 0$ such that $M - \alpha N \! \geq 0$.
\end{proposition}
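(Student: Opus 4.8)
The plan is to dispatch the ``if'' direction by a one-line Lagrangian-type identity, and to attack the substantial ``only if'' direction by a theorem of alternatives for the semidefinite feasibility problem $M-\alpha N\geq 0$, $\alpha\geq 0$, upgrading the resulting dual certificate into an explicit matrix $Z$ that contradicts the hypothesis. For the ``if'' direction, suppose some scalar $\alpha\geq 0$ satisfies $M-\alpha N\geq 0$. Then for any admissible $Z$ I would write
\begin{align*}
\bbm I\\ Z\ebm^\top M \bbm I\\ Z\ebm
&=\bbm I\\ Z\ebm^\top (M-\alpha N)\bbm I\\ Z\ebm \\
&\quad+\alpha\,\bbm I\\ Z\ebm^\top N\bbm I\\ Z\ebm .
\end{align*}
The first term is a congruence of the positive semidefinite matrix $M-\alpha N$, hence $\geq 0$; the second is $\geq 0$ because $\alpha\geq 0$ and the constraint $\bbm I\\ Z\ebm^\top N\bbm I\\ Z\ebm\geq 0$ holds. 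The sum is therefore $\geq 0$, as required.

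For the ``only if'' direction I would argue by contraposition through a separation argument. Assuming no admissible $\alpha$ exists, the ray $\{M-\alpha N \mid \alpha\geq 0\}$ misses the closed convex cone of positive semidefinite matrices, so a separating hyperplane produces a nonzero $Y\geq 0$ with $\langle N,Y\rangle\geq 0$ and $\langle M,Y\rangle<0$. (Consistency is immediate: if $M-\alpha N\geq 0$ with $\alpha\geq 0$, then $\langle M,Y\rangle\geq\alpha\langle N,Y\rangle\geq 0$, contradicting $\langle M,Y\rangle<0$.) The Slater point $\bar Z$ of \eqref{matSlater} is what I expect to guarantee that the separation can be taken strict and that $\langle N,Y\rangle$ is genuinely controlled rather than collapsing to an uninformative boundary case. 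The remaining goal is to convert $Y$ into a matrix $Z$ with $\bbm I\\ Z\ebm^\top N\bbm I\\ Z\ebm\geq 0$ for which $\bbm I\\ Z\ebm^\top M\bbm I\\ Z\ebm$ is not positive semidefinite, contradicting the standing hypothesis and thereby forcing an admissible $\alpha$ to exist.

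Here lies the main obstacle, and the point where the matrix S-lemma genuinely exceeds the classical scalar S-lemma of Yakubovich \cite{Yakubovich1977}. Writing a factorization $Y=\Xi\Xi^\top$ and partitioning the rows as $\Xi=\bbm A\\ B\ebm$, the dual certificate only delivers the \emph{trace} inequalities $\trace\big(\Xi^\top N\,\Xi\big)\geq 0$ and $\trace\big(\Xi^\top M\,\Xi\big)<0$, whereas the hypothesis is stated with the full \emph{matrix} inequality. The hard work is a rank-reduction and normalization step: one must show $Y$ can be chosen so that $A$ is square and invertible, set $Z:=BA^{-1}$, and then exploit the Slater matrix $\bar Z$ to perturb $Z$ so that the $N$-form becomes positive semidefinite while the $M$-form stays strictly indefinite. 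The feature that makes this possible is a Dines-type convexity statement for the image of the quadratic maps $Z\mapsto\big(\bbm I\\ Z\ebm^\top N\bbm I\\ Z\ebm,\ \bbm I\\ Z\ebm^\top M\bbm I\\ Z\ebm\big)$, and it is precisely the Slater assumption \eqref{matSlater} that forces this image to be convex and closed. I expect this rank-reduction-plus-perturbation step, rather than the separation itself, to be the crux: the scalar case (recovered when $q=1$) is easy exactly because there the normalization is automatic and the $N$-constraint collapses to the single inequality $\xi^\top N\xi\geq 0$.
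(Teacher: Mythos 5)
You should first be aware that the paper does not prove Proposition~\ref{matSlemma} at all: it is recalled as a known result from \cite{vanWaarde2020d} and used as a black box, so there is no in-paper proof to compare against and your argument must stand on its own. Your ``if'' direction does stand: writing $M=(M-\alpha N)+\alpha N$ and taking the congruence with $\bbm I\\ Z\ebm$ is complete and correct.

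The ``only if'' direction, however, has a genuine gap that you identify but do not close. The separation of the ray $\{M-\alpha N\mid\alpha\geq 0\}$ from the positive semidefinite cone only yields a nonzero $Y\geq 0$ with $\langle N,Y\rangle\geq 0$ and $\langle M,Y\rangle\leq 0$ (even the strictness $\langle M,Y\rangle<0$ needs an argument, since two disjoint closed convex sets need not admit a strict separator when neither is compact). More importantly, after factoring $Y=\bbm A\\ B\ebm\bbm A\\ B\ebm^\top$ you control only the \emph{trace} inequalities $\trace\bigl(\bbm A\\ B\ebm^\top N\bbm A\\ B\ebm\bigr)\geq 0$ and $\trace\bigl(\bbm A\\ B\ebm^\top M\bbm A\\ B\ebm\bigr)<0$, whereas the hypothesis you must contradict requires exhibiting a $Z$ for which the \emph{matrix} inequality $\bbm I\\ Z\ebm^\top N\bbm I\\ Z\ebm\geq 0$ holds. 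The device you propose to bridge this---a ``Dines-type convexity statement'' for the joint image of the matrix-valued quadratic maps $Z\mapsto\bbm I\\ Z\ebm^\top N\bbm I\\ Z\ebm$ and $Z\mapsto\bbm I\\ Z\ebm^\top M\bbm I\\ Z\ebm$---is not available: Dines' theorem concerns two \emph{scalar} quadratic forms, the image of even a single quadratic matrix map of this kind is in general not convex, and the Slater condition \eqref{matSlater} does not repair this. Closing exactly this trace-versus-matrix gap is the substance of the matrix S-lemma (it is why the result required the separate development in \cite{vanWaarde2020d}, which proceeds through explicit normalizations and rank/inertia reductions of $N$ rather than through image convexity). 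As written, your proposal proves one implication and offers a plan, resting on an unsupported and likely false convexity claim, for the other.
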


Before we can apply Proposition \ref{matSlemma} we note that the inequality \eqref{eqM1} is in terms of $(A,B,C,D)$ while the inequality \eqref{eqN1} is in terms of the \emph{transposed} matrices $(A^\top,C^\top,B^\top,D^\top)$. Therefore, we will need an additional dualization result that we formulate in the following proposition. 

\begin{proposition}\label{p:diss dual}
Consider a real $n \times n$ matrix $P=P^\top > 0$ and a matrix
$$
\begin{bmatrix}
A&B\\
C&D
\end{bmatrix}\in\R^{(n+p)\times (n+m)}.
$$
Suppose that $S=S^\top\in\R^{(m+p)\times (m+p)}$ satisfies Assumption~(A1). Define
\begin{equation}
\label{Shat}
 \hat{S}:=\begin{bmatrix}0&-I_p\\
 I_m&0 \end{bmatrix} S\inv \begin{bmatrix} 0&-I_m\\I_p&0 \end{bmatrix}.
\end{equation}
Then we have that
\begin{equation}
\label{eqL1}
\begin{bmatrix}
I&0\\
A&B
\end{bmatrix}^\top\!\!\begin{bmatrix}
P&0\\
0&-P
\end{bmatrix}\begin{bmatrix}
I&0\\
A&B
\end{bmatrix}+\begin{bmatrix}
0&I\\
C&D
\end{bmatrix}^\top\!\! S \begin{bmatrix}
0&I\\
C&D
\end{bmatrix}\geq 0
\end{equation}
if and only if 
\begin{align}\label{eq:L2}
\begin{bmatrix}
I&0\\
A^\top&C^\top
\end{bmatrix}^\top\!\!\begin{bmatrix}
P^{-1}&0\\
0&-P^{-1}
\end{bmatrix}
\begin{bmatrix}
I&0\\
A^\top&C^\top
\end{bmatrix}+\begin{bmatrix}
0&I\\
B^\top&D^\top
\end{bmatrix}^\top\!\! \hat{S} \begin{bmatrix}
0&I\\
B^\top&D^\top
\end{bmatrix} \geq 0.
\end{align}
\end{proposition}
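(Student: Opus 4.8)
The plan is to recast both \eqref{eqL1} and \eqref{eq:L2} as the nonnegativity of a single quadratic form on a graph subspace, and then to pass from the primal to the dual condition by orthogonal complementation together with a standard dualization lemma from the robust control literature (see, e.g., \cite{Scherer2001}). Writing $S=\bbm F & G\\ G\t & H\ebm$ as in the paper, I would set
\[
\calB := \im \bbm I & 0\\ A & B\\ 0 & I\\ C & D \ebm,
\qquad
\Pi := \bdiag(P,-P,S),
\]
so that \eqref{eqL1} is precisely the statement $\xi\t \Pi\,\xi \geq 0$ for all $\xi\in\calB$ (the ambient coordinates being ordered $(x,x^+,u,y)$). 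In the same way \eqref{eq:L2} reads $\eta\t \Pi_d\,\eta \geq 0$ for all $\eta\in\calB_d$, where $\calB_d:=\im\bbm I&0\\A\t&C\t\\0&I\\B\t&D\t\ebm$ and $\Pi_d:=\bdiag(P^{-1},-P^{-1},\hat S)$ are the transposed-system analogues. Because $P>0$ and, by Assumption~(A1), $\In(S)=(p,0,m)$, the matrix $\Pi$ is nonsingular with $\In(\Pi)=(n+p,\,0,\,n+m)$, while $\dim\calB=n+m$ and $\dim\calB^\perp=n+p$.

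Next I would identify $\calB_d$ with $\calB^\perp$ up to a fixed signed permutation. A short calculation gives
\[
\calB^\perp = \left\{ \col(-A\t b - C\t d,\; b,\; -B\t b - D\t d,\; d) \;:\; b\in\R^n,\ d\in\R^p \right\},
\]
and the orthogonal map $J$ acting by $J\,\col(x,x^+,u,y)=\col(x^+,-x,y,-u)$, i.e.
\[
J := \bbm 0 & I_n & 0 & 0\\ -I_n & 0 & 0 & 0\\ 0 & 0 & 0 & I_p\\ 0 & 0 & -I_m & 0 \ebm,
\]
carries this generating set onto the generators of $\calB_d$; hence $\calB_d = J\,\calB^\perp$.

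Now I would invoke the dualization lemma: for a nonsingular symmetric $\Pi$ whose number of positive eigenvalues equals $\dim\calB$, one has $\xi\t\Pi\,\xi\geq 0$ for all $\xi\in\calB$ if and only if $w\t\Pi^{-1} w \leq 0$ for all $w\in\calB^\perp$. The required dimension hypothesis holds precisely because (A1) forces the positive inertia of $\Pi$ to equal $n+m=\dim\calB$; this is the one place where (A1) is genuinely used. Since $\Pi=\bdiag(P,-P,S)$ we have $\Pi^{-1}=\bdiag(P^{-1},-P^{-1},S^{-1})$, and the argument closes once I verify the matrix identity $J\t \Pi_d\, J = -\Pi^{-1}$. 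Writing $S^{-1}=\bbm \tilde F & \tilde G\\ \tilde G\t & \tilde H \ebm$, conjugation by $J$ sends the two $P$-blocks to the correct signed diagonal positions and reshuffles the $S^{-1}$-block, and a direct comparison shows that $J\t\Pi_d J=-\Pi^{-1}$ holds exactly when $\hat S = \bbm -\tilde H & \tilde G\t\\ \tilde G & -\tilde F \ebm$, which is in turn the sandwich $\bbm 0 & -I_p\\ I_m & 0 \ebm S^{-1} \bbm 0 & -I_m\\ I_p & 0 \ebm$ prescribed by \eqref{Shat}. Granting this identity, $\eta\t\Pi_d\eta\geq 0$ on $\calB_d=J\calB^\perp$ is equivalent to $w\t(-\Pi^{-1})w\geq 0$, i.e.\ $w\t\Pi^{-1}w\leq 0$, on $\calB^\perp$, and the dualization lemma turns this back into \eqref{eqL1}, completing the equivalence.

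I expect the main obstacle to be bookkeeping rather than anything conceptual: carefully tracking the two block orderings $(n,n,m,p)$ and $(n,n,p,m)$ through the permutation $J$ and verifying $J\t\Pi_d J=-\Pi^{-1}$, since it is exactly this identity that pins down the form of $\hat S$ and explains the off-diagonal sign matrices appearing in \eqref{Shat}. The only genuinely substantive hypothesis is the inertia/dimension condition needed for the dualization lemma, which rests squarely on Assumption~(A1): the nonsingularity of $S$ and its signature $(p,0,m)$ are what guarantee that $\Pi$ is invertible and that its positive inertia matches $\dim\calB$, so that dropping (A1) would break the equivalence.
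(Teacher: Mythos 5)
Your proposal is correct, but it takes a genuinely different route from the paper's. The paper proves this proposition by interleaving the blocks of $\bdiag(P,-P,S)$ into a single symmetric matrix $\Psi$ with $\In(\Psi)=(n+p,0,n+m)$, writing \eqref{eqL1} as $\bbm I\\ R\ebm^\top\Psi\bbm I\\ R\ebm\geq 0$ with $R=\bbm A&B\\ C&D\ebm$, and then applying its own Lemma~\ref{l:dualizationresult}, which is a full inertia \emph{identity} relating this form to the transposed form $\bbm I\\ R^\top\ebm^\top\Xi\bbm I\\ R^\top\ebm$ and is proved self-containedly via Haynsworth's formula on a bordered matrix. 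You instead argue geometrically: you realize \eqref{eqL1} as nonnegativity of $\Pi=\bdiag(P,-P,S)$ on the graph subspace $\calB$, identify the dual graph subspace as $\calB_d=J\calB^\perp$ for an explicit signed permutation $J$, verify $J^\top\Pi_d J=-\Pi^{-1}$ (which is precisely what forces the form of $\hat S$ in \eqref{Shat}), and invoke the subspace dualization principle that for nonsingular $\Pi$ with positive inertia equal to $\dim\calB$, nonnegativity of $\Pi$ on $\calB$ is equivalent to nonpositivity of $\Pi^{-1}$ on $\calB^\perp$. This is exactly the Scherer-style alternative that the paper's sidebar acknowledges (via \cite[Lem.~4.9]{Scherer1999}) but deliberately does not take. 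I checked your steps: the description of $\calB^\perp$, the identities $J\calB^\perp=\calB_d$ and $J^\top\Pi_d J=-\Pi^{-1}$, and the inertia count $\rho_+(\Pi)=n+m=\dim\calB$ are all correct, and the dualization principle you quote is true in the stated non-strict form (it follows by noting that $\Pi^{-1}\calB^\perp$ is the $\Pi$-orthogonal complement of $\calB$ and that the maximal dimension of a $\Pi$-nonnegative subspace of a nonsingular $\Pi$ equals its positive inertia). The trade-off: your argument makes the role of Assumption~(A1) and of $P>0$ transparent as an inertia/dimension match, but leans on an external lemma stated without proof; the paper's argument is self-contained and delivers a stronger inertia identity that it reuses to prove Corollary~\ref{c:eqnoisemodels}.
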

\begin{proof}
Partition the matrix $S$ as 
$$
S = \begin{bmatrix}
S_{11} & S_{12} \\ S_{12}^\top & S_{22}
\end{bmatrix},
$$
where $S_{11} \in \mathbb{R}^{m \times m}$, $S_{12} \in \mathbb{R}^{m \times p}$ and $S_{22} \in \mathbb{R}^{p \times p}$. By positive definiteness of $P$ and Assumption (A1), the matrix 
\begin{equation}
\label{psidiss}
\Psi := \begin{bmatrix}
P & 0 & 0 & 0 \\ 0 & S_{11} & 0 & S_{12} \\ 
0 & 0 & -P & 0 \\ 0 & S_{12}^\top & 0 & S_{22}
\end{bmatrix}
\end{equation} 
satisfies $\In(\Psi) = (n+p,0,n+m)$. The proposition now readily follows by applying Lemma~\ref{l:dualizationresult} to the matrix $\Psi$ in \eqref{psidiss}.
\end{proof}

Proposition~\ref{p:diss dual} can be interpreted as saying that the system defined by the quadruple $(A,B,C,D)$ is dissipative with respect to the supply rate $S$, with storage function $P$ if and only if the dual system $(A^\top,C^\top,B^\top,D^\top)$ is dissipative with respect to the supply rate $\hat{S}$, with storage function $P^{-1}$.
A behavioral analogue of this result was obtained in \cite{Willems2002}, Proposition 12. 

By combining Propositions \ref{matSlemma} and \ref{p:diss dual}, we now arrive at the following characterization of informativity for dissipativity, given the noise model $\mathcal{N}_1$. 

\bthe\label{t:noise 1}
Suppose that there exists $V\in\R^{(n+m)\times(n+p)}$ such that
\beq\label{e:slater for n2}
\bbm I\\V\ebm^\top\!\! N_1 \bbm I\\V\ebm> 0.
\eeq
Partition 
$$
\begin{bmatrix}
\hatF&\hatG\\
\hatG^\top  &\hatH
\end{bmatrix}:=-S^{-1},
$$
where $\hatF=\hatF^\top \in\R^{m\times m}$, $\hatG\in\R^{m\times p}$, and $\hatH=\hatH^\top \in\R^{p\times p}$. 
Given the noise model $\calN_1$, the data $(U_- ,X,Y_- )$ are informative for dissipativity with respect to the supply rate \eqref{e:supply} if and only if there exist a real $n \times n$ matrix $Q=Q^\top>0$ and a scalar $\alpha\geq 0$ such that 
\begin{equation}
\begin{bmatrix}
    Q & 0 & 0 & 0 \\
    0 & \hatH & 0 &-\hatG^\top\\
    0 & 0 & -Q & 0 \\
    0 & -\hatG & 0 & \hatF
    \end{bmatrix} - \alpha 
    \None
     \geq 0. \label{LMI2}\tag{$\text{LMI}$}
\end{equation}
\ethe

We note that a sufficient condition for data-driven dissipativity with a common storage function was given in \cite[Thm. 4]{Koch2020b}. The attractive feature of Theorem~\ref{t:noise 1} is that it provides a necessary and sufficient condition, by making use of the matrix S-lemma.

We will prove Theorem~\ref{t:noise 1} by means of the following auxiliary lemma. This lemma shows that if all systems in $\Sigma^{\mathcal{N}_1}$ are dissipative with common storage function $P = P^\top \geq 0$, then $P$ is necessarily \emph{positive definite}. We note that conditions under which all storage functions are positive definite have been studied before in \cite[Lem. 1]{Hill1976}, even for nonlinear systems. In that paper, certain minimality conditions were imposed as well as a signature condition on the supply rate. Here, we do not assume minimality but we conclude that all storage functions are positive definite by using Assumption (A1) and an argument related to the noise model.
\begin{lemma}
\label{l:P>0}
Suppose that there exists a matrix $V$ such that \eqref{e:slater for n2} holds. If $P=P^\top\geq 0$ satisfies the dissipation inequality \eqref{eq:KY_- P} for all $(A,B,C,D) \in \Sigma^{\mathcal{N}_1}$ then $P > 0$. 
\end{lemma}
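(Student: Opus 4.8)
The plan is to argue by contradiction: assume $P=P^\top\geq 0$ is \emph{not} positive definite, so there is $\xi\neq 0$ with $P\xi=0$ (equivalently $\xi^\top P\xi=0$), and derive a violation of the hypothesis that \eqref{eq:KY_- P} holds for \emph{every} $(A,B,C,D)\in\Sigma^{\mathcal{N}_1}$. The first step is a reduction. For any feasible quadruple I test the dissipation inequality \eqref{eq:KY_- P} against the vector $\bbm \xi\\u\ebm$. Since $\bbm I&0\\A&B\ebm\bbm\xi\\u\ebm=\bbm\xi\\A\xi+Bu\ebm$ and $\bbm0&I\\C&D\ebm\bbm\xi\\u\ebm=\bbm u\\C\xi+Du\ebm$, using $\xi^\top P\xi=0$ and $P\geq0$ this yields $s(u,C\xi+Du)\geq (A\xi+Bu)^\top P(A\xi+Bu)\geq 0$. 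Hence it \emph{suffices to exhibit a single} $(A,B,C,D)\in\Sigma^{\mathcal{N}_1}$ and a single $u$ with $s(u,C\xi+Du)<0$.

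To obtain the needed freedom I use the Slater condition \eqref{e:slater for n2}: the quadruple realizing it is \emph{strictly} feasible, so a whole ball of quadruples around it lies in $\Sigma^{\mathcal{N}_1}$. In particular I may perturb $C$ alone (keeping $A,B,D$ fixed) and remain feasible, and because $\xi\neq 0$ the perturbation $\Delta_C$ can make $C\xi$ equal to any prescribed small vector. Writing $\phi(u):=s(u,C\xi+Du)$, $R:=\bbm I\\D\ebm$, $\eta:=\bbm 0\\C\xi\ebm$, and $Q:=R^\top S R\in\R^{m\times m}$, the function $\phi$ is quadratic in $u$ with leading form $Q$, and the reduction says I must prevent $\phi\geq 0$ from holding for all feasible systems.

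The crux is a case analysis on $Q$, and the subtle case is $Q>0$. If $Q\not\geq 0$, then along a direction $u_0$ with $u_0^\top Q u_0<0$ one has $\phi(tu_0)\to-\infty$, so $\phi<0$ somewhere. If $Q>0$, completing the square gives $\min_u\phi(u)=\eta^\top\big(S-SRQ^{-1}R^\top S\big)\eta$, and I claim the Schur complement $S-SRQ^{-1}R^\top S$ of $Q$ in $\bbm Q & R^\top S\\ SR & S\ebm=\bbm R^\top\\ I\ebm S\bbm R & I\ebm$ is negative semidefinite with kernel exactly $\im R$. Indeed, since $S$ is nonsingular (Assumption~(A1)) and $\bbm R & I\ebm$ has full row rank, this block matrix has inertia $(p,m,m)$; as $Q>0$ has inertia $(0,0,m)$, Haynsworth inertia additivity forces the Schur complement to have inertia $(p,m,0)$, i.e.\ it is $\leq 0$ with an $m$-dimensional kernel, which must be $\im R$ (one checks directly that $(S-SRQ^{-1}R^\top S)R=0$). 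Since $\eta=\bbm 0\\C\xi\ebm\in\im R=\{\,(u,Du)\,\}$ forces $C\xi=0$, perturbing $C$ so that $C\xi\neq 0$ gives $\min_u\phi(u)<0$. Finally, if $Q\geq 0$ is singular, take $u_0\in\ker Q$; the coefficient of the linear term of $\phi$ along $u_0$ is $2(C\xi)^\top b_0$, where $b_0$ is the output block of $S\bbm u_0\\Du_0\ebm$, and $b_0\neq 0$ because $S$ is nonsingular (otherwise $S\bbm u_0\\Du_0\ebm=0$ with $\bbm u_0\\Du_0\ebm\neq 0$). Perturbing $C$ so that $(C\xi)^\top b_0\neq 0$ then sends $\phi\to-\infty$ along $u_0$.

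In every case I produce a feasible system together with a $u$ for which $\phi(u)=s(u,C\xi+Du)<0$, contradicting the reduction derived from \eqref{eq:KY_- P}; hence no such $\xi$ exists and $P>0$. The main obstacle, and the place where Assumption~(A1) enters decisively, is the case $Q>0$: there the nominal ``feedthrough'' supply form is strictly positive and $\phi$ need not be indefinite in its leading term, so the negativity of $\phi$ must be extracted from the constant/cross terms through the Schur-complement inertia identity above, after using the perturbation freedom to ensure $C\xi\neq 0$.
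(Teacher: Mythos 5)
Your proof is correct and shares the paper's skeleton---restrict the dissipation inequality to $\ker P$, reduce to a sign condition on the ``feedthrough'' part $\phi(u):=s(u,C\xi+Du)$ of the supply rate, and then use the Slater condition to perturb $C$ and force $\xi=0$---but you implement the central step by a genuinely different device. The paper notes that $\phi(u)\geq 0$ for all $u$ amounts to positive semidefiniteness of the $(1+m)$-column congruence $\bbm 0 & I\\ C\xi & D\ebm^\top S\bbm 0 & I\\ C\xi & D\ebm$ and invokes Dancis' theorem (a matrix with inertia $(p,0,m)$ admits no positive semidefinite subspace of dimension exceeding $m$) to conclude in one stroke, uniformly in $D$, that $C\xi=0$ for \emph{every} feasible quadruple. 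You instead run an explicit case analysis on $Q=\bbm I\\ D\ebm^\top S\bbm I\\ D\ebm$ at the Slater point, via completing the square and Haynsworth's identity applied to $\bbm R^\top\\ I\ebm S \bbm R & I\ebm$; this is more elementary and self-contained, and it pinpoints exactly which scalar the perturbation of $C$ must render nonzero in each case, at the cost of length and of treating $Q>0$, $Q\geq 0$ singular, and $Q\not\geq 0$ separately. One step needs an extra line: in the singular case you claim the output block $b_0$ of $S\bbm u_0\\ Du_0\ebm$ is nonzero ``because $S$ is nonsingular,'' but nonsingularity of $S$ only gives $\bbm a_0\\ b_0\ebm:=S\bbm u_0\\ Du_0\ebm\neq 0$, not $b_0\neq 0$. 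The missing ingredient is $Qu_0=0$, i.e.\ $R^\top S R u_0=a_0+D^\top b_0=0$, so that $b_0=0$ would force $a_0=0$, hence $S\bbm u_0\\ Du_0\ebm=0$ and $u_0=0$, a contradiction. With that line added the argument is complete.
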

\begin{proof} 
Let $\xi\in\ker P$. It follows from \eqref{eq:KY_- P} that
$$
\bbm
\alpha\\\eta
\ebm^\top\!\!\!
\left(
-\bbm
\xi^\top\!\! A^\top \\ B^\top
\ebm
P
\bbm
A\xi & B
\ebm
+
\bbm
0 & I\\C\xi\ & D
\ebm^\top\!\!\!
S
\bbm
0 & I\\C\xi & D
\ebm
\right)
\bbm
\alpha\\\eta
\ebm \geq 0 
$$
for all $\alpha\in\R$, $\eta\in\R^m$, and $(A,B,C,D)\in\Sigma^{\mathcal{N}_1}$. This implies that
$$
\bbm
0 & I\\C\xi\ & D
\ebm^\top\!\!\!
S
\bbm
0 & I\\C\xi & D
\ebm\geq 0
$$
for every $(A,B,C,D)\in\Sigma^{\mathcal{N}_1}$. It follows from \cite[Theorem 3.1]{Dancis1986} that
$$
\dim\left(\ker \bbm
0 & I\\C\xi & D
\ebm\right)\geq 1.
$$
Therefore, $C\xi=0$ for every $(A,B,C,D)\in\Sigma^{\mathcal{N}_1}$. By hypothesis, the set $\Sigma^{\mathcal{N}_1}$ has nonempty interior. Consequently, we can conclude that $\xi=0$ and hence $P>0$. 
\end{proof}

\emph{Proof of Thm.~\ref{t:noise 1}:} To prove the ``if" statement, let $(A,B,C,D) \in \Sigma^{\mathcal{N}_1}$. We multiply \eqref{LMI2} from right and left by 
$$
\begin{bmatrix}
I & 0 \\ 
0 & I \\
A^\top & C^\top \\
B^\top & D^\top
\end{bmatrix}
$$
and its transpose. By the assumption on the noise (see Equation \eqref{eq:N1}), this leads to 
$$
\begin{bmatrix}
I&0\\
A^\top&C^\top
\end{bmatrix}^\top\!\!\begin{bmatrix}
Q&0\\
0&-Q
\end{bmatrix}\!\!
\begin{bmatrix}
I&0\\
A^\top&C^\top
\end{bmatrix}\notag+\begin{bmatrix}
0&I\\
B^\top&D^\top
\end{bmatrix}^\top\!\! \hat{S} \! \begin{bmatrix}
0&I\\
B^\top&D^\top
\end{bmatrix} \! \geq \! 0,
$$ 
where $\hat{S}$ is related to $S$ via \eqref{Shat}. Finally, by Proposition \ref{p:diss dual} we conclude that \eqref{eqL1} holds for $P = Q^{-1}$. That is, the data $(U_-,X,Y_-)$ are informative for dissipativity with respect to the supply rate \eqref{e:supply}.

To prove the ``only if" part, suppose that the data $(U_-,X,Y_-)$ are informative for dissipativity, equivalently, there exists a matrix $P = P^\top \geq 0$ such that \eqref{eq:KY_- P} holds for all $(A,B,C,D) \in \Sigma^{\mathcal{N}_1}$. Note that $P>0$ by Lemma~\ref{l:P>0}. Now, by Proposition \ref{p:diss dual} it follows that \eqref{eq:L2} holds for all $(A,B,C,D) \in \Sigma^{\mathcal{N}_1}$. We define $Q:=P^{-1}$. By rearranging terms in \eqref{eq:L2} we see that
$$
\begin{bmatrix}
I & 0 \\ 
0 & I \\
A^\top & C^\top \\
B^\top & D^\top
\end{bmatrix}^\top 
\begin{bmatrix}
    Q & 0 & 0 & 0 \\
    0 & \hatH & 0 &-\hatG^\top\\
    0 & 0 & -Q & 0 \\
    0 & -\hatG & 0 & \hatF
    \end{bmatrix}
\begin{bmatrix}
I & 0 \\ 
0 & I \\
A^\top & C^\top \\
B^\top & D^\top
\end{bmatrix} \geq 0
$$
holds for all $(A,B,C,D) \in \Sigma^{\mathcal{N}_1}$, i.e., for all $(A,B,C,D)$ satisfying \eqref{eqN1}. Finally, by Proposition \ref{matSlemma} there exists a scalar $\alpha \geq 0$ such that \eqref{LMI2} holds. This completes the proof. \hfill $\blacksquare$

Theorem~\ref{t:noise 1} provides a tractable method for verifying informativity for dissipativity, given the noise model $\mathcal{N}_1$. The procedure involves solving the linear matrix inequality \eqref{LMI2} for $Q$ and $\alpha$. Given $Q$, a common storage function $P$ for all systems in $\Sigma^{\mathcal{N}_1}$ is also readily computable as $P = Q^{-1}$. 

\begin{remark}
It follows from Corollary~\ref{c:eqnoisemodels} that, under Assumption (A2), we can always transform the noise model $\mathcal{N}_1$ to $\mathcal{N}_2$ and vice versa. Therefore, by combining Theorem~\ref{t:noise 1} and Corollary~\ref{c:eqnoisemodels} we can also establish necessary and sufficient conditions for informativity for dissipativity given the noise model $\mathcal{N}_2$. This again results in a data-based LMI condition for dissipativity, analogous to \eqref{LMI2}. 
\end{remark}

\section{Conclusions}
\label{sec:conc}
In this paper we have provided methods to verify dissipativity properties of linear systems directly from measured data. We have considered both the case of exact data and the case that the data are corrupted by noise. In the case of exact data, we have proven that one can only ascertain dissipativity of a system from given data if the system can be uniquely identified from the data. If this is the case, dissipativity can be verified by means of a data-based linear matrix inequality. In the case of noisy data, we have combined the matrix S-lemma \cite{vanWaarde2020d} with a dualization property relating dissipativity properties of the original system with those of its dual to characterize data informativity for dissipativity. As in the noiseless case, also in this setting, dissipativity properties of the data-generating system can be ascertained if a data-based LMI is solvable.
 
Apart from conditions for verifying dissipativity based on data, we have derived a number of additional results as byproducts that are interesting in their own right. First of all, we have shown in Corollary~\ref{c:eqnoisemodels} that, under mild assumptions, the different noise models studied in \cite{Berberich2019c} and \cite{vanWaarde2020d} are actually \emph{equivalent}. Moreover, it follows from Lemma~\ref{l:P>0} that informativity for dissipativity (given the noise model $\mathcal{N}_1$) requires the common storage function to be \emph{positive definite}. This is a surprising conclusion, since the definition of dissipativity only requires positive semidefinite storage functions.

\newpage 

\bibliographystyle{IEEEtran}
\bibliography{references}

\begin{thebibliography}{10}
\providecommand{\url}[1]{#1}
\csname url@samestyle\endcsname
\providecommand{\newblock}{\relax}
\providecommand{\bibinfo}[2]{#2}
\providecommand{\BIBentrySTDinterwordspacing}{\spaceskip=0pt\relax}
\providecommand{\BIBentryALTinterwordstretchfactor}{4}
\providecommand{\BIBentryALTinterwordspacing}{\spaceskip=\fontdimen2\font plus
\BIBentryALTinterwordstretchfactor\fontdimen3\font minus
  \fontdimen4\font\relax}
\providecommand{\BIBforeignlanguage}[2]{{%
\expandafter\ifx\csname l@#1\endcsname\relax
\typeout{** WARNING: IEEEtran.bst: No hyphenation pattern has been}%
\typeout{** loaded for the language `#1'. Using the pattern for}%
\typeout{** the default language instead.}%
\else
\language=\csname l@#1\endcsname
\fi
#2}}
\providecommand{\BIBdecl}{\relax}
\BIBdecl

\bibitem{Willems1972-1}
J.~C. Willems, ``Dissipative dynamical systems part {I}: general theory,''
  \emph{Archive for Rational Mechanics and Analysis}, vol.~45, pp. 321--351,
  1972.

\bibitem{Willems1972-2}
------, ``Dissipative dynamical systems part {II}: linear systems with
  quadratic supply rates,'' \emph{Archive for Rational Mechanics and Analysis},
  vol.~45, pp. 352--393, 1972.

\bibitem{Willems1971}
------, ``Least squares stationary optimal control and the algebraic {Riccati}
  equation,'' \emph{IEEE Transactions on Automatic Control}, vol.~16, pp.
  621--634, 1971.

\bibitem{Willems1998}
J.~C. Willems and H.~L. Trentelman, ``On quadratic differential forms,''
  \emph{SIAM Journal of Control and Optimization}, vol.~36, pp. 1703--1749,
  1998.

\bibitem{Trentelman1997}
J.~C. Willems, ``Every storage function is a state function,'' \emph{Systems
  and Control Letters}, vol.~32, pp. 249--259, 1997.

\bibitem{Willems2002}
J.~C. Willems and H.~L. Trentelman, ``Synthesis of dissipative systems using
  quadratic differential forms - part {I},'' \emph{IEEE Transactions on
  Automatic Control}, vol.~47, pp. 53--69, 2002.

\bibitem{Trentelman2002}
H.~L. Trentelman and J.~C. Willems, ``Synthesis of dissipative systems using
  quadratic differential forms - part {II},'' \emph{IEEE Transactions on
  Automatic Control}, vol.~47, pp. 70--86, 2002.

\bibitem{Doyle1989}
J.~C. Doyle, K.~Glover, P.~Khargonekar, and B.~A. Francis, ``State space
  solutions to standard {$H_{\infty}$} and {$H_2$} control problems,''
  \emph{IEEE Transactions on Automatic Control}, vol.~34, pp. 831--847, 1989.

\bibitem{Trentelman1999}
H.~L. Trentelman and P.~Rapisarda, ``New algorithms for polynomial
  {$J$}-spectral factorization,'' \emph{Mathematics of Control, Signals and
  Systems}, vol.~12, pp. 24--61, 1999.

\bibitem{Trentelman2001}
------, ``Pick matrix conditions for sign-definite solutions of the algebraic
  {Riccati} equation,'' \emph{SIAM Journal of Control and Optimization},
  vol.~40, pp. 969--991, 2001.

\bibitem{Camlibel2003}
M.~K. Camlibel{}, M.~N. Belur, and J.~C. Willems, ``On the dissipativity of
  uncontrollable systems,'' in \emph{42th IEEE Conference on Decision and
  Control}, Hawaii, USA, 2003.

\bibitem{Camlibel2002}
M.~K. Camlibel, W.~P.~M.~H. Heemels, and J.~M. Schumacher, ``On linear passive
  complementarity systems,'' \emph{European Journal of Control}, vol.~8, no.~3,
  pp. 220--237, 2002.

\bibitem{Frasca2010}
R.~Frasca, M.~K. Camlibel, I.~C. Goknar, L.~Iannelli, and F.~Vasca, ``Linear
  passive networks with ideal switches: consistent initial conditions and state
  discontinuities,'' \emph{IEEE Transactions on Circuits and Systems I},
  vol.~57, no.~12, pp. 3138--3151, 2010.

\bibitem{Camlibel2016}
M.~K. Camlibel and J.~M. Schumacher, ``Linear passive systems and maximal
  monotone mappings,'' \emph{Mathematical Programming}, vol. 157, no.~2, pp.
  397--420, 2016.

\bibitem{Romer2017}
A.~Romer, J.~M. Montenbruck, and F.~Allg\"ower, ``Determining dissipation
  inequalities from input-output samples,'' \emph{IFAC-PapersOnLine}, vol.~50,
  no.~1, pp. 7789--7794, 2017.

\bibitem{Romer2017b}
------, ``Sampling strategies for data-driven inference of passivity
  properties,'' in \emph{Proceedings of the IEEE Conference on Decision and
  Control}, 2017, pp. 6389--6394.

\bibitem{Maupong2017a}
T.~M. Maupong, J.~C. Mayo-Maldonado, and P.~Rapisarda, ``On {Lyapunov}
  functions and data-driven dissipativity,'' \emph{IFAC-PapersOnLine}, vol.~50,
  no.~1, pp. 7783--7788, 2017.

\bibitem{Romer2019}
A.~{Romer}, J.~{Berberich}, J.~{K\"ohler}, and F.~{Allg\"ower}, ``One-shot
  verification of dissipativity properties from input-output data,'' \emph{IEEE
  Control Systems Letters}, vol.~3, no.~3, pp. 709--714, July 2019.

\bibitem{Koch2020b}
A.~Koch, J.~Berberich, and F.~Allg\"ower, ``Provably robust verification of
  dissipativity properties from data,''
  \emph{https://arxiv.org/abs/2006.05974}, 2020.

\bibitem{Willems2005}
J.~C. Willems, P.~Rapisarda, I.~Markovsky, and B.~L.~M. {De Moor}, ``A note on
  persistency of excitation,'' \emph{Systems \& Control Letters}, vol.~54,
  no.~4, pp. 325--329, 2005.

\bibitem{vanWaarde2020c}
H.~J. {van Waarde}, C.~{De Persis}, M.~K. {Camlibel}, and P.~{Tesi}, ``Willems'
  fundamental lemma for state-space systems and its extension to multiple
  datasets,'' \emph{IEEE Control Systems Letters}, vol.~4, no.~3, pp. 602--607,
  2020.

\bibitem{Megretski1997}
A.~{Megretski} and A.~{Rantzer}, ``System analysis via integral quadratic
  constraints,'' \emph{IEEE Transactions on Automatic Control}, vol.~42, no.~6,
  pp. 819--830, 1997.

\bibitem{Iwasaki1998}
T.~{Iwasaki} and S.~{Hara}, ``Well-posedness of feedback systems: insights into
  exact robustness analysis and approximate computations,'' \emph{IEEE
  Transactions on Automatic Control}, vol.~43, no.~5, pp. 619--630, 1998.

\bibitem{Scherer1997}
C.~W. {Scherer}, ``A full block {S-procedure} with applications,'' in
  \emph{Proceedings of the IEEE Conference on Decision and Control}, 1997, pp.
  2602--2607.

\bibitem{Scherer2001}
C.~W. Scherer, ``{LPV} control and full block multipliers,'' \emph{Automatica},
  vol.~37, no.~3, pp. 361--375, 2001.

\bibitem{vanWaarde2020d}
H.~J. {Van Waarde}, M.~K. {Camlibel}, and M.~Mesbahi, ``From noisy data to
  feedback controllers: non-conservative design via a matrix {S}-lemma,''
  \emph{https://arxiv.org/abs/2006.00870}, 2020.

\bibitem{Yakubovich1977}
V.~A. Yakubovich, ``S-procedure in nonlinear control theory,'' \emph{Vestnik
  Leningrad University Mathematics}, vol.~4, pp. 73--93, 1977.

\bibitem{Berberich2019c}
J.~{Berberich}, A.~{Koch}, C.~W. {Scherer}, and F.~{Allg\"ower}, ``Robust
  data-driven state-feedback design,'' in \emph{American Control Conference},
  2020, pp. 1532--1538.

\bibitem{vanWaarde2020}
H.~J. {van Waarde}, J.~{Eising}, H.~L. {Trentelman}, and M.~K. {Camlibel},
  ``Data informativity: A new perspective on data-driven analysis and
  control,'' \emph{IEEE Transactions on Automatic Control}, vol.~65, no.~11,
  pp. 4753--4768, 2020.

\bibitem{Scherer1999}
C.~W. Scherer and S.~Weiland, \emph{Lecture Notes {DISC} Course on Linear
  Matrix Inequalities in Control}, 1999.

\bibitem{Koch2020}
A.~Koch, J.~Berberich, and F.~Allg\"ower, ``Verifying dissipativity properties
  from noise-corrupted input-state data,'' in \emph{Proceedings of the IEEE
  Conference on Decision and Control}, 2020, pp. 616--621.

\bibitem{Hill1976}
D.~Hill and P.~Moylan, ``The stability of nonlinear dissipative systems,''
  \emph{IEEE Transactions on Automatic Control}, vol.~21, no.~5, pp. 708--711,
  1976.

\bibitem{Dancis1986}
J.~Dancis, ``A quantitative formulation of {S}ylvester's law of inertia.
  {III},'' \emph{Linear Algebra and its Applications}, vol.~80, pp. 141--158,
  1986.

\bibitem{Bernstein2011}
D.~S. Bernstein, \emph{Matrix Mathematics: Theory, Facts, and Formulas}.\hskip
  1em plus 0.5em minus 0.4em\relax Princeton University Press, 2011.

\bibitem{Maddocks1988}
J.~Maddocks, ``Restricted quadratic forms, inertia theorems, and the {S}chur
  complement,'' \emph{Linear Algebra and its Applications}, vol. 108, pp.
  1--36, 1988.

\end{thebibliography}

\newpage 

\section[How to verify Assumption (A2)?]{Sidebar: How to verify Assumption (A2)?}
\label{sidebar:(A2)}

Assumption (A2) can be verified using the following lemma. 

\blem\label{l:bounded with nonempty interior}
Let $\Psi_{11}=\Psi_{11}^\top\in\R^{q\times q}$, $\Psi_{12}\in\R^{q\times r}$, and $\Psi_{22}=\Psi_{22}^\top\in\R^{r\times r}$. Then, the set
$$
\calN := \left\{ R\in\R^{r\times q} \mid \bbm I\\R\ebm^\top\bbm \Psi_{11}&\Psi_{12}\\\Psi_{12}^\top&\Psi_{22}\ebm\bbm I\\R \ebm\geq 0 \right\}
$$
is bounded and has nonempty interior if and only if $\Psi_{22}<0$ and $\Psi_{11}-\Psi_{12}\Psi_{22}\inv\Psi_{12}^\top>0$.
\elem
\begin{proof}
Let 
$$
\Psi=\bbm \Psi_{11}&\Psi_{12}\\\Psi_{12}^\top&\Psi_{22}\ebm.
$$
To prove sufficiency, note that $\Psi_{22} < 0$ implies that $\mathcal{N}$ is bounded. Define $R := -\Psi_{22}^{-1}\Psi_{12}^\top$ and note that $\Psi_{11} - \Psi_{12} \Psi_{22}^{-1} \Psi_{12}^\top > 0$ implies 
$$
f(R):=\begin{bmatrix}
I \\ R
\end{bmatrix}^\top \Psi \begin{bmatrix}
I \\ R
\end{bmatrix} > 0.
$$
This means that $\mathcal{N}$ has nonempty interior.

To prove necessity, we first show that $\Psi_{22}<0$. Let $R \in \inter(\mathcal{N})$ and let $\xi \in \mathbb{R}^r$ be such that $\xi^\top \Psi_{22} \xi \geq 0$. Then $f(R+\alpha \xi\xi^\top (\Psi_{12}^\top + \Psi_{22}R)) \geq 0$ for all $\alpha \geq 0$. Since $\mathcal{N}$ is bounded, we conclude that 
\begin{equation}
\label{xiPsi}
\xi^\top (\Psi_{12}^\top + \Psi_{22}R) = 0.
\end{equation} 
Since $R \in \inter(\mathcal{N})$ is arbitrary, it follows that for all $R_1,R_2 \in \inter(\mathcal{N})$ the equality $\xi^\top \Psi_{22}(R_1 - R_2) = 0$ holds. This implies that $\xi^\top \Psi_{22} = 0$ and, by \eqref{xiPsi}, also $\xi^\top \Psi_{12}^\top = 0$. Now, observe that $f(R + \alpha \xi \xi^\top) = f(R) \geq 0$ for all $\alpha \in \mathbb{R}$ and $R \in \mathcal{N}$. By boundedness of $\mathcal{N}$, this implies that $\xi = 0$. Therefore, $\Psi_{22} < 0$. 

To prove the rest of the claim, let $\zeta \in \mathbb{R}^q$ and $\eta \in \mathbb{R}^r$ be such that 
\begin{equation}
\label{kernelPsi}
\Psi \begin{bmatrix}
\zeta \\ \eta
\end{bmatrix} = 0.
\end{equation}
If $R \in \mathcal{N}$ then 
\begin{eqnarray*}
0 &\leq& \zeta^\top f(R) \zeta = \begin{bmatrix}
\zeta \\ R\zeta 
\end{bmatrix}^\top \Psi \begin{bmatrix}
\zeta \\ R\zeta
\end{bmatrix}\\
& =& \left( \begin{bmatrix}
0 \\ R\zeta - \eta
\end{bmatrix} + \begin{bmatrix} \zeta \\ \eta \end{bmatrix} \right)^\top \Psi \left( \begin{bmatrix}
0 \\ R\zeta - \eta
\end{bmatrix}+ \begin{bmatrix} \zeta \\ \eta \end{bmatrix} \right),
\end{eqnarray*}
and thus $0 \leq (R\zeta-\eta)^\top \Psi_{22} (R\zeta-\eta)$. Since $\Psi_{22} < 0$, we conclude that $R\zeta - \eta = 0$ for all $R \in \mathcal{N}$. This implies that $(R_1 - R_2) \zeta = 0$ for all $R_1,R_2 \in \mathcal{N}$. Since the interior of $\mathcal{N}$ is nonempty, we conclude that $\zeta = 0$. Thus, \eqref{kernelPsi} leads to $\Psi_{22} \eta = 0$ and since $\Psi_{22}<0$, we conclude $\eta = 0$. Therefore, $\Psi$ is nonsingular. By Haynsworth's  inertia formula (see \cite[Fact 6.5.5]{Bernstein2011}),
\begin{equation}
\label{InertiaPsi}
\In(\Psi) = \In(\Psi_{22}) + \In(\Psi_{11} - \Psi_{12} \Psi_{22}^{-1} \Psi_{12}^\top).
\end{equation}
Let $\nu$ be the number of negative eigenvalues of $\Psi$. From $\Psi_{22} < 0$ and \eqref{InertiaPsi} we see that $\nu \geq r$. Since $\mathcal{N}$ is nonempty, it follows from \cite[Thm. 3.1]{Dancis1986} that $\nu \leq r$. Therefore, we conclude that $\nu = r$. Since $\Psi$ is nonsingular, \eqref{InertiaPsi} implies that $\Psi_{11} - \Psi_{12} \Psi_{22}^{-1} \Psi_{12}^\top > 0$, proving the claim.
\end{proof}

\newpage 

\section[Dualization result]{Sidebar: A dualization result}
\label{sidebar:dual}
Dualization is an important concept in robust control, see e.g. \cite{Scherer2001,Scherer1999}. In this paper, we provide a dualization result in Lemma~\ref{l:dualizationresult}. It is instrumental to prove Proposition~\ref{p:diss dual} that dualizes the dissipation inequality, and Corollary~\ref{c:eqnoisemodels} stated below, that relates the noise models $\mathcal{N}_1$ and $\mathcal{N}_2$. These two results can also by obtained from the dualization lemma in \cite[Lem. 4.9]{Scherer1999} by choosing appropriate complementary subspaces $\mathcal{U}$ and $\mathcal{V}$ in that result. Here, we give an alternative proof by exploiting inertia properties in Lemma~\ref{l:dualizationresult}.

\begin{lemma}
\label{l:dualizationresult}
Consider the nonsingular symmetric matrices
$$
\Psi=\bbm \Psi_{11}&\Psi_{12}\\\Psi_{12}^\top&\Psi_{22}\ebm \text{ and } \Xi=\bbm 0 & -I_r\\I_q & 0\ebm
\Psi\inv
\bbm 0 & -I_q\\I_r & 0\ebm,
$$
where $\Psi_{11}=\Psi_{11}^\top\in\R^{q\times q}$, $\Psi_{12}\in\R^{q\times r}$, and $\Psi_{22}=\Psi_{22}^\top\in\R^{r\times r}$. For any $R\in\R^{r\times q}$ we have that
\begin{equation}
\label{inertiarel}
\In\left( \bbm
I \\ R
\ebm^\top
\Psi
\bbm
I \\ R
\ebm \right) + \In\left( - \Psi^{-1}\right) = \In \left( \bbm
I \\ R^\top
\ebm^\top
\Xi
\bbm
I \\ R^\top
\ebm \right) + (q,0,q).
\end{equation}
In particular, if $\In(\Psi) = (r,0,q)$ then 
\begin{equation}
\label{equiv}
\bbm
I \\ R
\ebm^\top
\Psi
\bbm
I \\ R
\ebm \geq 0 \iff \bbm
I \\ R^\top
\ebm^\top
\Xi
\bbm
I \\ R^\top
\ebm \geq 0.
\end{equation}
\end{lemma}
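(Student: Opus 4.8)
The plan is to reduce the inertia identity \eqref{inertiarel} to a single inertia computed two ways on one bordered matrix, chosen so that both pivots are on blocks that stay nonsingular for \emph{every} $R$. This is the whole point: the naive route, a Schur complement with respect to $\bbm I\\R\ebm^\top\Psi\bbm I\\R\ebm$, breaks down exactly when that block is singular, so I want to avoid pivoting on it altogether.

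First I would set $W:=\bbm I_q\\ R\ebm$ and $V:=\bbm -R^\top\\ I_r\ebm$ and record three elementary facts: $W^\top V=0$, both $W$ and $V$ have full column rank, and hence $\bbm W & V\ebm$ is a nonsingular $(q+r)\times(q+r)$ matrix with $\bbm W & V\ebm^\top W=\bbm W^\top W\\ 0\ebm$. A short computation with the two sign-permutation matrices in the definition of $\Xi$ shows $V=\bbm 0 & -I_q\\ I_r & 0\ebm\bbm I\\ R^\top\ebm$ and $-V^\top=\bbm I\\ R^\top\ebm^\top\bbm 0 & -I_r\\ I_q & 0\ebm$, so that
\[
\bbm I\\ R^\top\ebm^\top\Xi\bbm I\\ R^\top\ebm=-V^\top\Psi^{-1}V.
\]
This identifies the right-hand inertia in \eqref{inertiarel} with $\In(-V^\top\Psi^{-1}V)$ and reduces the lemma to proving $\In(W^\top\Psi W)+\In(-\Psi^{-1})=\In(-V^\top\Psi^{-1}V)+(q,0,q)$.

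Next I would introduce $\mathcal{H}:=\bbm -\Psi^{-1} & W\\ W^\top & 0\ebm$ of size $2q+r$ and compute $\In(\mathcal{H})$ twice. Pivoting on the always-nonsingular block $-\Psi^{-1}$, Haynsworth's inertia additivity formula \cite{Bernstein2011} gives $\In(\mathcal{H})=\In(-\Psi^{-1})+\In(W^\top\Psi W)$, since the Schur complement is $0-W^\top(-\Psi^{-1})^{-1}W=W^\top\Psi W$; this reproduces the left-hand side. For the second count I would apply the congruence by $\diag(\bbm W & V\ebm, I_q)$, turning $\mathcal{H}$ (using $W^\top V=0$) into
\[
\bbm -W^\top\Psi^{-1}W & -W^\top\Psi^{-1}V & W^\top W\\ -V^\top\Psi^{-1}W & -V^\top\Psi^{-1}V & 0\\ W^\top W & 0 & 0\ebm
\]
with block sizes $(q,r,q)$. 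The principal submatrix on the first and third blocks is $\bbm -W^\top\Psi^{-1}W & W^\top W\\ W^\top W & 0\ebm$, which is nonsingular with inertia exactly $(q,0,q)$ because $W^\top W\succ 0$ and any block $\bbm \ast & B\\ B & 0\ebm$ with $B$ invertible is hyperbolic. Pivoting on this $2q$-dimensional block, I would verify that the Schur-complement correction to the remaining block $-V^\top\Psi^{-1}V$ vanishes identically — the inverse of the pivot has zero $(1,1)$ corner while the relevant cross term has zero second component — so Haynsworth yields $\In(\mathcal{H})=(q,0,q)+\In(-V^\top\Psi^{-1}V)$.

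Equating the two values of $\In(\mathcal{H})$ gives the reduced identity, hence \eqref{inertiarel}. For the ``in particular'' claim, when $\In(\Psi)=(r,0,q)$ one has $\In(-\Psi^{-1})=(q,0,r)$; comparing the number of \emph{negative} eigenvalues on the two sides of \eqref{inertiarel} (the $q$'s contributed by $-\Psi^{-1}$ and by $(q,0,q)$ cancel) shows that $\bbm I\\ R\ebm^\top\Psi\bbm I\\ R\ebm$ and $\bbm I\\ R^\top\ebm^\top\Xi\bbm I\\ R^\top\ebm$ have the same number of negative eigenvalues, so one is positive semidefinite iff the other is, which is \eqref{equiv}. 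The main obstacle is structural rather than computational: the argument must remain valid when $W^\top\Psi W$ is singular, and the design of $\mathcal{H}$ ensures its two admissible pivots — $-\Psi^{-1}$ and the hyperbolic $2q$ block built from the positive definite Gram matrix $W^\top W$ — are nonsingular for \emph{all} $R$, with the clean cancellation of the cross term (traceable to $W^\top V=0$) delivering exactly $-V^\top\Psi^{-1}V$.
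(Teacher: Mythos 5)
Your proof is correct, and at its core it is the paper's own argument in light disguise: your bordered matrix $\mathcal{H}=\bbm -\Psi^{-1} & W\\ W^\top & 0\ebm$ with $W=\bbm I\\ R\ebm$ is exactly the paper's $M_R$ after a block permutation (a congruence), and your first Haynsworth pivot on the always-nonsingular block $-\Psi^{-1}$, giving $\In(-\Psi^{-1})+\In(W^\top\Psi W)$, is identical to the paper's. The two proofs differ only in the second inertia count. The paper pivots directly on the leading block $\bbm 0 & I\\ I & \hat{\Psi}_{11}\ebm$ of $M_R$, whose inertia $(q,0,q)$ it takes from a lemma of Maddocks, and checks by a short block computation that the corresponding Schur complement is the $\Xi$-form. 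You instead apply the congruence by $\diag\big(\bbm W & V\ebm, I_q\big)$ with $V=\bbm -R^\top\\ I_r\ebm$ and pivot on the hyperbolic block built from $W^\top W$; the extra dividend is the identity $\bbm I\\ R^\top\ebm^\top\Xi\bbm I\\ R^\top\ebm=-V^\top\Psi^{-1}V$, which exhibits the dual quadratic form as $-\Psi^{-1}$ compressed to the annihilator of the graph of $R$ --- precisely the complementary-subspaces picture behind the Scherer dualization lemma that the paper mentions as an alternative derivation. Both versions are equally robust to $W^\top\Psi W$ being singular, since in each case every pivot used is nonsingular for all $R$; yours costs one extra congruence and the separate verification of the $-V^\top\Psi^{-1}V$ identity, and buys a more conceptual explanation of where $\Xi$ comes from. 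I checked the individual steps --- $W^\top V=0$, the $(q,0,q)$ inertia of $\bbm A & B\\ B & 0\ebm$ for invertible symmetric $B$, the vanishing of the cross-term correction because the $(1,1)$ corner of the pivot's inverse is zero, and the negative-signature comparison yielding \eqref{equiv} --- and they all hold.
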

\begin{proof}
Let
$$
\begin{bmatrix}
\hat{\Psi}_{11} & \hat{\Psi}_{12} \\
\hat{\Psi}_{12}^\top & \hat{\Psi}_{22}
\end{bmatrix} := - \Psi^{-1}
$$
where $\hat{\Psi}_{11}\in\R^{q\times q}$, $\hat{\Psi}_{12}\in\R^{q\times r}$, and $\hat{\Psi}_{22}\in\R^{r\times r}$. Also let $R\in\R^{r\times q}$ and define
$$
M_R := \begin{bmatrix}
0 & I & R^\top \\
I & \hat{\Psi}_{11} & \hat{\Psi}_{12} \\
R & \hat{\Psi}_{12}^\top & \hat{\Psi}_{22}
\end{bmatrix}.
$$
By Haynsworth's inertia theorem (see \cite[Fact 6.5.5]{Bernstein2011}) we have
\begin{equation}
\label{inertiaMx1}
\In(M_R) = \In(-\Psi^{-1}) + \In\left(\begin{bmatrix}
I \\ R
\end{bmatrix}^\top \Psi \begin{bmatrix}
I \\ R 
\end{bmatrix}\right).
\end{equation}
Next, we define
$$
N := \begin{bmatrix}
0 & I \\ I & \hat{\Psi}_{11}
\end{bmatrix}.
$$
Note that 
$$
N^{-1} = \begin{bmatrix}
-\hat{\Psi}_{11} & I \\ I & 0
\end{bmatrix}.
$$
By \cite[Lemma 5.1]{Maddocks1988} the matrix $N$ has inertia $\In(N) = (q,0,q)$. We also have that the Schur complement of $M_R$ with respect to $N$ is given by
\begin{align*}
\hat{\Psi}_{22} - \begin{bmatrix}
R & \hat{\Psi}_{12}^\top 
\end{bmatrix}
\begin{bmatrix}
-\hat{\Psi}_{11} & I \\ I & 0
\end{bmatrix}
\begin{bmatrix}
R^\top \\ \hat{\Psi}_{12}
\end{bmatrix} &= \\
\hat{\Psi}_{22} + R \hat{\Psi}_{11} R^\top - R\hat{\Psi}_{12} - \hat{\Psi}_{12}^\top R^\top &= \\
\begin{bmatrix}
I \\ R^\top 
\end{bmatrix}^\top 
\begin{bmatrix}
\hat{\Psi}_{22} & - \hat{\Psi}_{12}^\top \\
-\hat{\Psi}_{12} & \hat{\Psi}_{11}
\end{bmatrix}
\begin{bmatrix}
I \\ R^\top 
\end{bmatrix} &= \\
\begin{bmatrix}
I \\ R^\top 
\end{bmatrix}^\top \Xi \begin{bmatrix}
I \\ R^\top 
\end{bmatrix}&.
\end{align*}
This implies that
\begin{equation}
\label{inertiaMx2}
\In(M_R) = \In(N) + \In\left(\begin{bmatrix}
I \\ R^\top 
\end{bmatrix}^\top \Xi \begin{bmatrix}
I \\ R^\top 
\end{bmatrix}\right).
\end{equation}
By combining \eqref{inertiaMx1} and \eqref{inertiaMx2} we obtain \eqref{inertiarel}, as desired. 
Finally, suppose that $\In(\Psi) = (r,0,q)$. This implies that $\In(-\Psi^{-1}) = (q,0,r)$ and thus 
\begin{equation*}
\In\left( \bbm
I \\ R
\ebm^\top
\Psi
\bbm
I \\ R
\ebm \right) = \In \left( \bbm
I \\ R^\top
\ebm^\top
\Xi
\bbm
I \\ R^\top
\ebm \right) + (0,0,q-r).
\end{equation*}
This implies \eqref{equiv} which proves the lemma. 
\end{proof}
\begin{corollary}
\label{c:eqnoisemodels}
Suppose that Assumption (A2) holds. Then the noise models $\mathcal{N}_1$ and $\mathcal{N}_2$ are equivalent in the following sense: $\mathcal{N}_1 = \mathcal{N}_2$ if the partitioned matrices $\Phi$ and $\Theta$ in \eqref{eq:N1} and \eqref{eq:N2} are related by
$$
\begin{bmatrix}
\Phi_{11} & \Phi_{12} \\ \Phi_{12}^\top & \Phi_{22}
\end{bmatrix} = \bbm 0 & -I\\I & 0\ebm
\begin{bmatrix}
\Theta_{11} & \Theta_{12} \\ \Theta_{12}^\top & \Theta_{22}
\end{bmatrix}\inv
\bbm 0 & -I\\I & 0\ebm.
$$
\end{corollary}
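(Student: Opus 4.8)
The plan is to obtain Corollary~\ref{c:eqnoisemodels} as an immediate consequence of the dualization Lemma~\ref{l:dualizationresult}, once the block structure is matched correctly; the only substantive point to settle beforehand is the inertia hypothesis required by the equivalence \eqref{equiv}.

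First I would instantiate Lemma~\ref{l:dualizationresult} with $\Psi := \Theta$, taking $q := T$ and $r := n+p$, so that the free matrix $R \in \R^{r \times q}$ in the lemma plays the role of $V \in \R^{(n+p)\times T}$. With these identifications the associated matrix $\Xi$ becomes
$$
\Xi = \bbm 0 & -I_{n+p} \\ I_T & 0 \ebm \Theta^{-1} \bbm 0 & -I_T \\ I_{n+p} & 0 \ebm,
$$
which is precisely the matrix $\Phi$ prescribed in the statement (with the implicit identity blocks read off from the dimensions). Since $R^\top = V^\top$, the two quadratic forms appearing in \eqref{equiv} are then exactly the defining inequalities of $\mathcal{N}_2$ (phrased in terms of $V$) and of $\mathcal{N}_1$ (phrased in terms of $V^\top$), respectively. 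Consequently, provided the hypotheses of the lemma hold, \eqref{equiv} reads $V \in \mathcal{N}_2 \iff V \in \mathcal{N}_1$, which is exactly the asserted equality $\mathcal{N}_1 = \mathcal{N}_2$.

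It then remains to verify the inertia condition $\In(\Theta) = (r,0,q) = (n+p,0,T)$, which simultaneously guarantees the nonsingularity of $\Theta$ demanded by the lemma. Here I would invoke Assumption (A2): since $\mathcal{N}_2$ is bounded with nonempty interior, Lemma~\ref{l:bounded with nonempty interior} applied to $(\Theta_{11},\Theta_{12},\Theta_{22})$ yields $\Theta_{22} < 0$ and $\Theta_{11} - \Theta_{12}\Theta_{22}^{-1}\Theta_{12}^\top > 0$. As $\Theta_{22}$ is $(n+p)\times(n+p)$ and the Schur complement is $T \times T$, Haynsworth's inertia formula \eqref{InertiaPsi} gives $\In(\Theta) = (n+p,0,0) + (0,0,T) = (n+p,0,T)$, as needed.

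The main obstacle I anticipate is organizational rather than computational: keeping the transpose bookkeeping straight. The set $\mathcal{N}_1$ is described through $V^\top$ whereas $\mathcal{N}_2$ is described through $V$, and the whole argument hinges on choosing $q = T$, $r = n+p$ so that the $R \leftrightarrow R^\top$ interchange built into Lemma~\ref{l:dualizationresult} swaps the two representations rather than some mismatched pair. Once this alignment is fixed and the inertia of $\Theta$ is pinned down, no further work is required, and the conclusion drops out of \eqref{equiv}.
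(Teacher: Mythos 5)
Your proposal is correct and follows essentially the same route as the paper: the paper's own proof likewise obtains $\In(\Theta)=(n+p,0,T)$ from Assumption (A2) via Lemma~\ref{l:bounded with nonempty interior} and then applies Lemma~\ref{l:dualizationresult} with $\Psi=\Theta$. Your version merely spells out the dimension bookkeeping ($q=T$, $r=n+p$, $R=V$, $\Xi=\Phi$) and the Haynsworth step that the paper leaves implicit.
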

\begin{proof}
By Assumption (A2) and Lemma~\ref{l:bounded with nonempty interior} we have $\In(\Theta) = (n+p,0,T)$. The corollary now follows readily from Lemma~\ref{l:dualizationresult}.
\end{proof}

\newpage 

\section[summary]{Article summary}

The concept of dissipativity, as introduced by Jan Willems, is one of the cornerstones of systems and control theory. Typically, dissipativity properties are verified by resorting to a mathematical model of the system under consideration. In this paper, we aim at assessing dissipativity by computing storage functions for linear systems directly from measured data. As our main contributions, we provide conditions under which dissipativity can be ascertained from a finite collection of noisy data samples. Three different noise models will be considered that can capture a variety of situations, including the cases that the data samples are noise-free, the energy of the noise is bounded, or the individual noise samples are bounded. All of our conditions are phrased in terms of data-based linear matrix inequalities, which can be readily solved using existing software packages. 

\newpage

\section[Biographies]{Author biographies}

\subsection{Henk van Waarde}

Henk J. van Waarde obtained the master degree (summa cum laude) and Ph.D. degree (cum laude) in Applied Mathematics from the University of Groningen in 2016 and 2020, respectively. Currently, he is a postdoctoral researcher at the University of Cambridge, UK. He was also a visiting researcher at the University of Washington, Seattle in 2019-2020. His research interests include data-driven control, system identification and identifiability, networks of dynamical systems, robust and optimal control, and the geometric theory of linear systems. He was the recipient of the 2020 Best Ph.D. Thesis Award of the Dutch Institute of Systems and Control.

\subsection{Kanat Camlibel}

M. Kanat Camlibel received the Ph.D. degree in mathematical theory of systems and control from Tilburg University in 2001. He is currently a full professor at the Bernoulli Institute for Mathematics, Computer Science, and Artificial Intelligence, University of Groningen where he served as an assistant/associate professor between 2007 and 2020. From 2001 to 2007, he held post-doctoral/assistant professor positions with the University of Groningen, Tilburg University, and Eindhoven Technical University. His research interests include differential variational inequalities, complementarity problems, optimization, piecewise affine dynamical systems, switched linear systems, constrained linear systems, multiagent systems, model reduction, geometric theory of linear systems, and data-driven control.

\subsection{Paolo Rapisarda}

Paolo Rapisarda received the Laurea (M.Sc.) degree in Computer Science from the University of Udine, Italy, and the Ph.D. degree in Mathematical Systems and Control Theory at the University of Groningen, The Netherlands, working under the supervision of J.C. Willems and H.L. Trentelman. He is currently a Full Professor in the Vision, Learning and Control Group of the University of Southampton, U.K. He is Associate Editor of the IEEE Transactions on Automatic Control, of Multidimensional Systems and Signal Processing, and of the IMA Journal of Mathematical Control and Information.  Further information can be obtained at http://www.ecs.soton.ac.uk/people/pr3. 

\subsection{Harry Trentelman}

Harry L. Trentelman is a professor of Systems and Control at the Bernoulli Institute for Mathematics, Computer Science, and Artificial Intelligence of the University of Groningen in The Netherlands. From 1991 to 2008, he served as an associate professor and later as an adjoint professor at the same institute. From 1985 to 1991, he was an assistant
professor, and later, an associate professor at the Mathematics Department of the University of Technology at Eindhoven, The Netherlands. He obtained his Ph.D. degree in Mathematics at the University of Groningen in 1985 under the
supervision of Jan C. Willems. His research interests are the behavioral approach to systems and control, robust control, model reduction, multi-dimensional linear systems, hybrid systems, analysis, control and model reduction of networked
systems, and the geometric theory of linear systems. He is a co-author of the textbook Control Theory for Linear Systems (Springer, 2001). Dr. Trentelman is past senior editor of the IEEE Transactions on Automatic Control, and past associate editor of Automatica, SIAM Journal on Control and Optimization and Systems and Control Letters. He is a Fellow
of the IEEE.

\end{document}